\newtheorem{thm}{Theorem}
\newtheorem{prop}{Proposition}
\newtheorem{cor}{Corollary}
\newtheorem{conj}{Conjecture}
\begin{document}

\author[Brian Nakamura]{Brian Nakamura$^1$}
%\address{Brian Nakamura, Mathematics Department, Rutgers University-New Brunswick, Piscataway, NJ, USA.}
%\email{bnaka@math.rutgers.edu}
\thanks{Mathematics Department, Rutgers University-New Brunswick, Piscataway, NJ, USA. [bnaka@math.rutgers.edu]}
\thanks{Current website: \url{http://math.rutgers.edu/\~bnaka/CAV}}

\title{Computational Approaches to Consecutive Pattern Avoidance in Permutations}

\begin{abstract}
	In recent years, there has been increasing interest in consecutive pattern avoidance in permutations. In this paper, we introduce two approaches to counting permutations that avoid a set of prescribed patterns consecutively. These algoritms have been implemented in the accompanying Maple package {\tt CAV}, which can be downloaded from the author's website. As a byproduct of the first algorithm, we have a theorem giving a sufficient condition for when two pattern sets are strongly (consecutively) Wilf-Equivalent. For the implementation of the second algorithm, we define the cluster tail generating function and show that it always satisfies a certain functional equation. We also explain how the {\tt CAV} package can be used to approximate asymptotic constants for single pattern avoidance.
\end{abstract}

\maketitle

\section{Introduction}

Let $\sigma = \sigma_{1} \cdots \sigma_{k}$ be a sequence of $k$ distinct positive integers. We define the \emph{reduction} $\rho(\sigma)$ to be the length $k$ permutation we get by relabeling the elements of $\sigma$ with $\{ 1, \ldots, k \}$ so that they retain the same order relationships they had in $\sigma$. For example, $\rho(5 3 8 6) = 2 1 4 3$. For a permutation $p$, we will also write $|p|$ for the number of elements in the permutation. Let $m$ and $n$ be positive integers with $m \leq n$, and let $p \in \mathcal{S}_{m}$ and $\sigma = \sigma_{1} \cdots \sigma_{n} \in \mathcal{S}_{n}$. We will say that $\sigma$ \emph{contains} $p$ \emph{consecutively} if $\rho(\sigma_{i} \cdots \sigma_{i+m-1}) = p$ for some $i$ where $1 \leq i \leq n-m+1$. Otherwise, we say that $\sigma$ \emph{avoids} $p$ \emph{consecutively}. Similarly, if $B$ is a set of permutations, then we say that $\sigma$ \emph{avoids} $B$ \emph{consecutively} if for every $p \in B$, $\sigma$ avoids the pattern $p$ consecutively. For example, the permutation $1 2 3 6 5 4 \in \mathcal{S}_{6}$ contains the permutation pattern $1 2 4 3$, since $\rho(2 3 6 5) = 1 2 4 3$. However, the permutation $1 2 4 5 3 \in \mathcal{S}_{5}$ avoids the pattern $1 2 4 3$ consecutively.\\

In general, we are interested in counting permutations that avoid a pattern (or a set of patterns). Given a set of patterns $B$, let $\alpha_{B}(n)$ be the number of length $n$ permutations that avoid $B$ consecutively. If $B$ consists of only a single pattern $p$, we may write $\alpha_{p}(n)$ instead, and if no ambiguity would arise, we may just write $\alpha(n)$. For a given set of patterns $B$, we would like to find the exponential generating function

\begin{eqnarray}
	A_{B}(z) = \mathop{\sum} \limits_{n=0}^{\infty} {\alpha_{B}(n) \frac{z^{n}}{n!}}.
\end{eqnarray}

\noindent If no ambiguity would arise, this may also be denoted by $A(z)$. In addition, we define a more general exponential generating function

\begin{eqnarray}
	P_{B}(z,t) = \mathop{\sum} \limits_{k,n \geq 0} {b_{k,n} \frac{z^{n} t^{k}}{n!}}
\end{eqnarray}

\noindent where $b_{k,n}$ is the number of length $n$ permutations that contain exactly $k$ occurrences of the patterns in $B$. Again, we may write $P(z,t)$ if the set $B$ is clear. We will also define $\alpha(n,t) = \sum_{k \geq 0}{b_{k,n} t^{k}}$. Note that $P(z,0) = A(z)$ and $\alpha(n,0) = \alpha(n)$.\\

In addition, we will say that two sets of patterns $B$ and $B^{\prime}$ are consecutively Wilf-Equivalent (sometimes written c-Wilf-Equivalent) if $A_{B}(z) = A_{B^{\prime}}(z)$. We will also say that $B$ and $B^{\prime}$ are strongly c-Wilf-Equivalent if $P_{B}(z,t) = P_{B^{\prime}}(z,t)$. Since this paper deals solely with consecutive patterns, the word ``consecutive'' will be omitted in most instances. For the rest of this paper, the reader should assume that all mentions of containment, avoidance, and Wilf-Equivalence are consecutive.\\

In recent years, there has been an increasing amount of research done on consecutive pattern avoidance in permutations. One of the early papers by Elizalde and Noy (~\cite{EN}) finds generating functions $A(z)$ and $P(z,t)$ for certain cases of single pattern avoidance. Using various techniques, additional generating functions for specific single patterns and multi-pattern sets have been found in ~\cite{AAM, DK, LR, MR}. In particular, our approach will resemble the cluster method approach in ~\cite{DK}.\\
% Add Kitaev's paper?

The results in this paper utilize an extension of the Goulden-Jackson cluster method (\cite{GJ, NZ}). We restate some of the terminology and notation here.\\

Let $B$ be a set of patterns. Without loss of generality, assume that $B$ contains no trivial redundancies (i.e., there are no $p_{1},p_{2} \in B$ with $p_{1} \neq p_{2}$ such that $p_{1}$ contains $p_{2}$). We say that an ordered pair $(p; [[i_{1},j_{1}], \ldots, [i_{m},j_{m}]])$ is a length $k$ cluster if it satisfies the following:\\

\begin{enumerate}
	\item[(a)] $p \in \mathcal{S}_{k}$
	\item[(b)] $i_{1} = 1$, $j_{m} = k$, and $i_{n} < i_{n+1} < j_{n}$ for $1 \leq n \leq m-1$ (i.e., each interval overlaps with the neighboring interval, and the intervals cover $p$)
	\item[(c)] $\rho(p_{i_{n}} \cdots p_{j_{n}}) \in B$ for all $1 \leq n \leq m$.\\
\end{enumerate}

\noindent Let $\mathcal{C}_{k}$ be the set of clusters of length $k$, and for a cluster $w = (p; [[i_{1},j_{1}], \ldots, [i_{m},j_{m}]])$, define $weight(w) = (t-1)^{m}$, where $t$ will be the variable used to track occurrences. Let $C(k) = \mathop{\sum} \limits_{w \in \mathcal{C}_{k}} {weight(w)}$. From an adaptation of ~\cite{GJ} to the present context of an ``infinite'' alphabet and exponential generating functions, we have:

\begin{thm}\label{ThmGJ}
	\begin{eqnarray}
		P(z,t) = \frac{1}{1 - z - \mathop{\sum} \limits_{k \geq 1} {C(k) \frac{z^{k}}{k!}}}
	\end{eqnarray}
\end{thm}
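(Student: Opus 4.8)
The plan is to derive the cluster formula as an exponential-generating-function analogue of the classical Goulden--Jackson cluster method. First I would set up a weighted count over all permutations: for $\sigma \in \mathcal{S}_n$ and a chosen \emph{marked} set of occurrences of patterns from $B$ inside $\sigma$, assign the weight $(t-1)^{(\text{number of marked occurrences})}$. Summing over all choices of marked set for a fixed $\sigma$ telescopes, by the binomial theorem, to $t^{(\text{total number of occurrences in }\sigma)}$; hence $P(z,t)$ is exactly the exponential generating function of this ``permutations-with-marked-occurrences'' weighted count. The goal is then to show that such a marked object decomposes uniquely into a linear sequence of blocks, where each block is either a single unmarked letter or a \emph{cluster} (a maximal run of mutually overlapping marked occurrences, which is precisely the combinatorial object defined by conditions (a)--(c)), and to translate this decomposition into the stated rational expression.

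The key steps, in order: (1) Formalize the decomposition. Given a marked permutation, look at the union of the intervals of the marked occurrences; this union is a disjoint union of maximal intervals, and on each such maximal interval the restriction (after reduction) together with its list of marked sub-intervals is a cluster in the sense of the definition. The letters outside all marked intervals are the ``free'' letters. Reading left to right, $\sigma$ is cut into an ordered sequence of pieces, each piece being either a cluster or a single free letter. (2) Account for the relabeling. Because we use an infinite alphabet / EGF bookkeeping, assembling $j$ ordered pieces of sizes $k_1, \dots, k_j$ into a length-$n = \sum k_i$ permutation amounts to choosing which actual values go into each piece and then reducing; this is the standard EGF product rule, so the contribution multiplies as $\frac{z^{k_i}}{k_i!}$ per piece with no extra binomial factors. (3) Identify the per-piece generating functions: a free letter contributes $z$ (i.e.\ $z^1/1!$ with weight $1$), and the clusters of length $k$ contribute $C(k)\frac{z^k}{k!}$, where the weight $(t-1)^m$ recorded in $weight(w)$ is exactly the product of the $(t-1)$ factors from the $m$ marked occurrences in that cluster. (4) Sum the geometric series over all lengths $j \geq 0$ of the ordered sequence of pieces: each piece independently contributes $z + \sum_{k\ge 1} C(k)\frac{z^k}{k!}$, so
\begin{eqnarray}
P(z,t) = \sum_{j \ge 0}\left( z + \sum_{k \ge 1} C(k)\frac{z^k}{k!}\right)^{\! j} = \frac{1}{1 - z - \sum_{k \ge 1} C(k)\frac{z^k}{k!}},
\end{eqnarray}
which is the claimed identity.

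The main obstacle I expect is step (1): proving that the decomposition into free letters and clusters is genuinely a \emph{bijection} between marked permutations and ordered sequences of such pieces, with the weights matching on the nose. One has to check that every cluster (in the abstract sense of (a)--(c)) really does arise this way, that maximality of the overlapping-intervals runs makes the decomposition unique, and that the ``no trivial redundancies'' hypothesis on $B$ is compatible with the marking (so that marked occurrences are of patterns in $B$ and the cluster conditions are exactly the right ones). A secondary subtlety is making the infinite-alphabet EGF formalism precise — that choosing the value sets for the pieces and reducing is weight-preserving and introduces exactly the factorials in the denominators — since this is where the present setting diverges from the original Goulden--Jackson formulation over a finite alphabet with ordinary generating functions. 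Once the bijection and this bookkeeping are in place, steps (2)--(4) are routine manipulations of formal power series.
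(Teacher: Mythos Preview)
The paper does not actually prove this theorem: it is stated without proof as an adaptation of the Goulden--Jackson cluster method to the permutation/EGF setting, with the argument deferred to the references \cite{GJ,NZ}. Your outline is precisely the standard Goulden--Jackson proof carried over to this context, so there is nothing to compare against and your sketch is correct.

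One remark on the obstacle you flagged in step~(1): the ``no trivial redundancies'' hypothesis on $B$ is exactly what makes the bijection go through. It ensures that no marked-occurrence interval can be contained in another (otherwise one pattern in $B$ would consecutively contain another), and once no interval contains another, sorting the intervals of a connected component by left endpoint automatically yields the chain condition $i_n < i_{n+1} \le j_n$ of part~(b). So the concern you anticipated is real, and the hypothesis is there precisely to dispatch it.
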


\noindent This theorem will essentially be the basis for our algorithm and our results.\\

So far, generating functions have been found for specific single patterns and multi-pattern sets and for certain single pattern families where some specific structure can be exploited. In this paper, we will outline two algorithms to calculate $\alpha(n)$ more efficiently, and both algorithms have been implemented in the accompanying Maple package {\tt CAV}. The Maple package can be downloaded from the author's website. As a result of the first algorithm in Section~\ref{SectCAV}, we get a theorem for proving when two pattern sets are strong c-Wilf-Equivalent. During preparation of this paper, the author learned that this result was also independently proven by Khoroshkin and Shapiro in \cite{KS} by slightly different means. To establish the much faster second algorithm in Section~\ref{SectCAVT}, we define a new generating function which we refer to as the cluster tail generating function. We show that this generating function always satisfies a certain functional equation and give a constructive approach to finding it. This functional equation is then used to compute values for $\alpha(n)$ much more quickly. We use our algorithm to give some asymptotic approximations in Section~\ref{SectAsym}. We conclude with Section~\ref{SectConj} by sharing some new conjectures we have based off of experimentation with our {\tt CAV} package. Beyond the theorems and results in this paper though, we hope that the {\tt CAV} Maple package will be a useful tool for others in studying consecutive pattern avoidance in permutations. \\

%%% NEW SECTION %%%
\section{Consecutive Pattern Avoidance via Clusters}\label{SectCAV}

Let $B$ be a set of fixed patterns that we would like to avoid (consecutively). We may assume that $B$ contains no redundancies (i.e., there does not exist $p_{1},p_{2} \in B$ with $p_{1} \neq p_{2}$ such that $p_{1}$ contains $p_{2}$). Again, $\alpha(n)$ will be the number of length $n$ permutations avoiding $B$.\\

From the Goulden-Jackson cluster method (Theorem~\ref{ThmGJ}), we can get the equation

\begin{eqnarray}
	P(z,t) = 1 + z P(z,t) + P(z,t) \mathop{\sum} \limits_{k \geq 1} {C(k) \frac{z^{k}}{k!}}
\end{eqnarray}

\noindent and by extracting the coefficients of $z^{n}$, we get the following recurrence:

\begin{eqnarray}
	\alpha(n,t) = n \alpha(n-1,t) + \mathop{\sum} \limits_{k=1}^{n} { {n \choose k} C(k) \alpha(n-k,t)} \label{AvRecur}
\end{eqnarray}

\noindent Additionally, consider a fixed $p \in B$ and let $m = |p|$. Let $\mathcal{C}_{k}[p] = \{ (\pi; [[i_{1},j_{1}], \ldots, [i_{r},j_{r}]]) \in \mathcal{C}_{k} : \rho(\pi_{i_{r}} \cdots \pi_{j_{r}}) = p \}$, the set of length $k$ clusters ending in the pattern $p$. Let $\mathcal{C}_{k}[p; [x_{1}, \ldots, x_{m}]]$ be the clusters in $\mathcal{C}_{k}[p]$ with the last $m$ terms $\{x_{1}, \ldots, x_{m}\}$, where $x_{1} < x_{2} < \ldots < x_{m}$. Similarly, define

\begin{eqnarray}
 C(k,p) & = & \mathop{\sum} \limits_{w \in \mathcal{C}_{k}[p]} {weight(w)}\label{Ckp}\\
 C(k,p;[x_{1}, \ldots, x_{m}]) & = & \mathop{\sum} \limits_{w \in \mathcal{C}_{k}[p; [x_{1}, \ldots, x_{m}]]} {weight(w)} \label{Ckpx}.
\end{eqnarray}

\noindent If $B$ contains only one pattern, these may be denoted by $C(k)$ and $C(k;[x_{1}, \ldots, x_{m}])$, respectively.  We will use Equation \eqref{AvRecur} to count permutations avoiding $B$.\\

\subsection{General Algorithm}\label{CavAlg} 

Computationally, the difficulty in using Equation \eqref{AvRecur} lies in calculating $C(k)$ quickly. One way to do this is to create a recurrence for $C(k,p;[x_{1}, \ldots, x_{|p|}])$ for each $p \in B$.\\

We can do this as follows: for a given cluster $w$, let $p_{1}$ and $p_{2}$ be the last marked pattern and the second to last marked pattern in $w$, respectively. Let $j$ be the length of the overlap of $p_{1}$ and $p_{2}$ in $w$, i.e., the tail of length $j$ of $p_{2}$ coincides with the head of length $j$ of $p_{1}$. We want to ``chop off'' the last $|p_{1}| - j$ terms of $w$ and apply the reduction to get a shorter cluster, say $w^{\prime} \in \mathcal{C}_{k^{\prime}}[p_{2}; [x_{1}, \ldots, x_{|p_{2}|}]]$, which ends in the pattern $p_{2}$. Then, $weight(w) = weight(p_{1}) \cdot weight(w^{\prime})$.\\

Additionally, once we ``chop off'' the tail of $p_{1}$ and apply the reduction to get a shorter cluster $w^{\prime}$, we actually know what the last $j$ terms of $w^{\prime} = w_{1}^{\prime} \cdots w_{k^{\prime}}^{\prime}$ will be. For each term $w_{i}^{\prime}$ with $|w^{\prime}| - j + 1 \leq i \leq |w^{\prime}|$, the reduction forces $w_{i}^{\prime}$ to be $w_{i} - (\text{\# of terms in } w \text{ ``chopped off'' that were less than } w_{i})$. Thus, to compute $C(k,p_{1};[x_{1}, \ldots, x_{|p_{1}|}])$, we need to sum over all possible ways to ``fill out'' the rest of the terms in the final $p_{2}$ pattern of $w^{\prime}$. We also need to sum over all possible choices of $p_{2} \in B$ and all possible ways that the tails of this $p_{2}$ overlap with the heads of the final $p_{1}$ pattern.\\

In summary, the number of length $n$ permutations avoiding set $B$ can be found by, first, generating a cluster recurrence for $C(k,p;[x_{1}, \ldots, x_{|p|}])$ for each $p \in B$. Next, use the recurrence $\alpha(n) = n \alpha(n-1) + \mathop{\sum} \limits_{k=1}^{n} { {n \choose k} C(k) \alpha(n-k)}$ using the base cases $\alpha(0) = \alpha(1) = 1$ and $\alpha(n) = 0$ if $n < 0$. Use the cluster recurrences to compute $C(k)$ as needed:

\begin{eqnarray}
	C(k) & = & \mathop{\sum} \limits_{p \in B} C(k,p)\\
	C(k,p) & = & \mathop{\sum} \limits_{1 \leq x_{1} < x_{2} < \ldots < x_{m} \leq k} C(k,p; [x_{1}, \ldots, x_{m}])
\end{eqnarray}

Also recall that if $w = (\pi; [i_{1},j_{1}], \ldots, [i_{m},j_{m}])$, then $weight(w) = (t-1)^{m}$ will keep track of occurrences of patterns with variable $t$, while setting $t=0$ and using $weight(w) = (-1)^{m}$ would count only the permutations that avoid the designated pattern set $B$.

\subsection{Example}\label{RecurExSect}

Let $B = \{ 2 1 4 3 \}$. Let $w = (\pi; [[i_{1},j_{1}], \ldots, [i_{m},j_{m}]])$ be a length $k$ cluster and $\{ x_{1}, \ldots, x_{4} \}$ be the last $4$ terms of $w$ with $x_{1} < \ldots < x_{4}$ (i.e., $\pi_{k-3} = x_{2}$, $\pi_{k-2} = x_{1}$, $\pi_{k-1} = x_{4}$, and $\pi_{k} = x_{3}$). Then, the second to last pattern must also be a $2 1 4 3$ pattern and can have an overlap of length $1$ or $2$ with the last pattern.\\

If the overlap is of length $2$, let $\pi^{\prime} = \rho(\pi_{1} \cdots \pi_{k-2})$ and $w^{\prime} = (\pi^{\prime}; [[i_{1},j_{1}], \ldots, [i_{m-1},j_{m-1}]])$, the cluster found by ``chopping off'' the tail of the final bad pattern in $w$ and then canonically reducing. Now let $\{ y_{1}, \ldots, y_{4} \}$ be the last $4$ terms of $w^{\prime}$ with $y_{1} < \ldots < y_{4}$ (i.e., $\pi^{\prime}_{k-3} = y_{2}$, $\pi^{\prime}_{k-2} = y_{1}$, $\pi^{\prime}_{k-1} = y_{4}$, and $\pi^{\prime}_{k} = y_{3}$). Notice that the terms ``chopped off'' from $w$ were $x_{4}$ and $x_{3}$. Since both of these are larger than both $x_{2}$ and $x_{1}$, applying the reduction does not change their values. Thus, $y_{4} = x_{2}$ and $y_{3} = x_{1}$. Summing over all possible tails for $w^{\prime}$ and accounting for the last pattern that was removed from $w$, we get

\begin{eqnarray}
	\mathop{\sum} \limits_{\substack{1 \leq y_{1} < \ldots < y_{4} \leq k-2\\ y_{3} = x_{1}\\ y_{4} = x_{2}}} {weight(2 1 4 3) \cdot C(k-2;[y_{1}, y_{2}, y_{3}, y_{4}])}.
\end{eqnarray}

If the overlap is of length $1$, let $\pi^{\prime} = \rho(\pi_{1} \cdots \pi_{k-3})$ and $w^{\prime} = (\pi^{\prime}; [[i_{1},j_{1}], \ldots, [i_{m-1},j_{m-1}]])$, since the tail that gets ``chopped off'' has $3$ terms. Again, let $\{ y_{1}, \ldots, y_{4} \}$ be the last $4$ terms of $w^{\prime}$ with $y_{1} < \ldots < y_{4}$ (i.e., $\pi^{\prime}_{k-3} = y_{2}$, $\pi^{\prime}_{k-2} = y_{1}$, $\pi^{\prime}_{k-1} = y_{4}$, and $\pi^{\prime}_{k} = y_{3}$). The terms ``chopped off'' from $w$ are $x_{1}$, $x_{4}$, and $x_{3}$. Since exactly one term less than $x_{2}$ (only $x_{1}$) was removed, applying the reduction would reduce $x_{2}$ by $1$. Thus, $y_{3} = x_{2} - 1$. Summing over all possible tails for $w^{\prime}$ and accounting for the last pattern that was removed from $w$, we get

\begin{eqnarray}
	\mathop{\sum} \limits_{\substack{1 \leq y_{1} < \ldots < y_{4} \leq k-3\\ y_{3} = x_{2} - 1}} {weight(2 1 4 3) \cdot C(k-3;[y_{1}, y_{2}, y_{3}, y_{4}])}.
\end{eqnarray}

We combine the two possibilities along with the base cases to get the recurrence.\\

\noindent For $k < 4$:

\begin{eqnarray*}
	C(k;[x_{1}, x_{2}, x_{3}, x_{4}]) = 0
\end{eqnarray*}

\noindent For $k = 4$:

\begin{eqnarray*}
	C(k;[x_{1}, x_{2}, x_{3}, x_{4}]) = weight(2 1 4 3)
\end{eqnarray*}

\noindent For $k > 4$:

\begin{eqnarray}
	C(k;[x_{1}, x_{2}, x_{3}, x_{4}]) & = & \mathop{\sum} \limits_{\substack{1 \leq y_{1} < \ldots < y_{4} \leq k-3\\ y_{3} = x_{2} - 1}} {weight(2 1 4 3) \cdot C(k-3;[y_{1}, y_{2}, y_{3}, y_{4}])} \label{RecurEx}\\
	& & + \mathop{\sum} \limits_{\substack{1 \leq y_{1} < \ldots < y_{4} \leq k-2\\ y_{3} = x_{1}\\ y_{4} = x_{2}}} {weight(2 1 4 3) \cdot C(k-2;[y_{1}, y_{2}, y_{3}, y_{4}])} \notag
\end{eqnarray}

\noindent Using this recurrence, we can compute $C(k)$ for any value of $k$ and compute $\alpha(n)$ using Equation \eqref{AvRecur}. To keep track of all occurrence of $2 1 4 3$ with the variable $t$, let $weight(2 1 4 3) = t-1$. To only count permutations that avoid $2 1 4 3$, set $t=0$ so that $weight(2 1 4 3) = -1$ for the above recurrence.\\

\subsection{Results for c-Wilf-Equivalence}

Even though Section~\ref{CavAlg} is algorithmic in nature, it yields a strong theoretical byproduct. The cluster recurrence generated by the pattern set $B$ totally determines $\alpha(n)$. In fact, it also totally determines $P(z,t)$. However, the ``overlapping'' relations between the patterns in $B$ totally determines the cluster recurrence.\\

More specifically, let $B$ be the set of patterns we want to avoid, and let $\pi, \sigma \in B$ where $m = |\pi|$ and $n = |\sigma|$. Note that $\pi$ and $\sigma$ are not necessarily distinct. Suppose that $\rho(\sigma_{n - j + 1} \cdots \sigma_{n}) = \rho(\pi_{1} \cdots \pi_{j})$ (the tail of $\sigma$ and the head of $\pi$ has an overlap of length $j$). Then, define the following sets:

\begin{eqnarray}
	OverlapMap(\sigma,\pi,j) & = & \{ (\pi_{1},\sigma_{n - j + 1}), (\pi_{2},\sigma_{n-j+2}), \ldots, (\pi_{j},\sigma_{n})\}\\
	OverlapMaps(\sigma,\pi) & = & \{ OverlapMap(\sigma,\pi,j) : \rho(\sigma_{n - j + 1} \cdots \sigma_{n}) = \rho(\pi_{1} \cdots \pi_{j})\}
\end{eqnarray}\\

For example, in Section~\ref{RecurExSect}, the pattern $2143$ has self-overlaps of length $1$ and $2$. For a length $1$ overlap, we have $OverlapMap( 2143, 2143, 1) = \{(2,3)\}$. This combined with the length of the pattern, which is $4$, and the length of the original cluster, denoted by $k$, completely determines the first summation in Equation~\eqref{RecurEx}. Similarly, for a length $2$ overlap, we have $OverlapMap( 2143, 2143, 2) = \{(2,4), (1,3)\}$. Combining this with the length of the pattern, again $4$, and the length of the original cluster, again $k$, completely determines the second summation in Equation~\eqref{RecurEx}. Thus, $OverlapMaps(2143,2143) = \{ \{(2,3)\}, \{(2,4), (1,3)\} \}$ and $| 2143| = 4$ completely determines the cluster recurrence.\\

\noindent Therefore we have the following result based off of our algorithm:\\

\begin{thm}\label{OverlapThm}
	Let $B$ and $B^{\prime}$ be two sets of patterns with $|B| = |B^{\prime}|$. Suppose there is some labeling of the elements (patterns) in sets $B$ and $B^{\prime}$, say $B = \{ p_{1}, \ldots, p_{k} \}$ and $B^{\prime} =\{ p_{1}^{\prime}, \ldots, p_{k}^{\prime}\}$, such that $|p_{i}| = |p_{i}^{\prime}|$ for $1 \leq i \leq k$, and $OverlapMaps(p_{i},p_{j}) = OverlapMaps(p_{i}^{\prime},p_{j}^{\prime})$ for all $1 \leq i,j \leq k$. Then, $B$ and $B^{\prime}$ are strongly c-Wilf-Equivalent.
\end{thm}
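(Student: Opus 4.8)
The plan is to show that the hypothesis on $OverlapMaps$ implies that $B$ and $B'$ generate identical cluster recurrences, and hence the same cluster numbers $C(k)$, from which $P_B(z,t) = P_{B'}(z,t)$ follows immediately by Theorem~\ref{ThmGJ}. The key object is the family of refined cluster sums $C(k, p_i; [x_1, \ldots, x_{|p_i|}])$, which by the construction in Section~\ref{CavAlg} satisfy a recurrence in $k$ whose right-hand side is built entirely out of (i) the lengths $|p_j|$ of the patterns, (ii) the overlap lengths $j$ for which $\rho(\sigma_{n-j+1}\cdots\sigma_n) = \rho(\pi_1\cdots\pi_j)$, and (iii) the identification of which of the ``inherited'' top terms $y_\ell$ of the chopped cluster $w'$ are forced to equal which shifted values of the $x$'s — and this last piece of data is exactly what $OverlapMap(\sigma,\pi,j)$ records (together with how many chopped terms lie below each retained term, which is itself determined by the overlap map since the chopped-off positions and their relative order are determined by $\pi$ and $j$).

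The steps, in order, would be as follows. First, I would state precisely the general recurrence for $C(k, p; [x_1, \ldots, x_{|p|}])$ obtained by the chopping argument of Section~\ref{CavAlg}: for $k > |p|$,
\begin{equation*}
C(k, p; [\vec{x}]) = \sum_{\sigma \in B} \; \sum_{\mathclap{\text{overlaps } j}} \; \sum_{\mathclap{\text{valid tails } \vec{y}}} weight(p) \cdot C(k - (|p| - j), \sigma; [\vec{y}]),
\end{equation*}
with base cases $C(k,p;[\vec{x}]) = 0$ for $k < |p|$ and $C(|p|, p; [\vec{x}]) = weight(p)$. Second, I would verify that the inner constraints defining the ``valid tails'' $\vec{y}$ — namely which $y_\ell$ are pinned to which $x_r - (\text{shift})$ — depend on $(\sigma, p, j)$ only through $OverlapMap(\sigma, p, j)$ and the length $|p|$: the set of chopped positions is $\{|p|-j+1, \ldots, |p|\}$ read off from $p$, the relative order of the chopped values among themselves and against the retained values is determined by $p$, hence the shift applied to each retained $x_r$ and the pinning $y_\ell = x_r - (\text{shift})$ are determined. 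Third, since $weight(p) = weight(p')$ (both equal $(t-1)$ to the same power, as $|p| = |p'|$ is irrelevant here — the weight depends only on being one marked pattern, so it is literally $t-1$), and since the hypothesis gives a length-preserving bijection $p_i \leftrightarrow p_i'$ with $OverlapMaps(p_i, p_j) = OverlapMaps(p_i', p_j')$ for all $i,j$, the recurrence for the primed family is term-by-term identical to the unprimed one under the relabeling. Fourth, by induction on $k$, I would conclude $C(k, p_i; [\vec{x}]) = C(k, p_i'; [\vec{x}])$ for all $k$ and all $\vec{x}$, hence $C(k, p_i) = C(k, p_i')$ after summing over $\vec{x}$, hence $C(k) = \sum_i C(k, p_i) = \sum_i C(k, p_i') = C'(k)$. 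Finally, plugging the equal sequences $\{C(k)\}$ into the formula of Theorem~\ref{ThmGJ} gives $P_B(z,t) = P_{B'}(z,t)$, i.e.\ $B$ and $B'$ are strongly c-Wilf-Equivalent.

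The main obstacle, and the step deserving the most care, is the bookkeeping in the second step: one must argue rigorously that the entire combinatorial content of the inner sum — the range of the free $y_\ell$'s, the equality constraints pinning the remaining $y_\ell$'s, and the downward shifts coming from ``number of chopped terms below $x_r$'' — is a function of $OverlapMap(\sigma, p, j)$ alone and not of any other feature of the specific patterns. The subtlety is that $OverlapMap(\sigma, p, j)$ is defined as a set of pairs $(\pi_r, \sigma_{n-j+1+\cdots})$ of \emph{values}, so I need to check that these value-pairs, together with the ambient lengths, recover the relative order of all $|p|$ top entries of any cluster ending in $p$ and of the chopped segment — which they do, because within an overlap of length $j$ the reductions $\rho(\pi_1\cdots\pi_j)$ and $\rho(\sigma_{n-j+1}\cdots\sigma_n)$ coincide by hypothesis, and $OverlapMap$ encodes precisely the matching of their entries. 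Once this is pinned down, the induction and the appeal to Theorem~\ref{ThmGJ} are routine.
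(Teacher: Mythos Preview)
Your proposal is correct and follows essentially the same line as the paper's own proof: both argue that the cluster recurrence for $C(k,p;[\vec{x}])$ is determined solely by pattern lengths and the $OverlapMaps$ data, so that $B$ and $B'$ yield identical recurrences and hence identical $C(k)$, whence $P_B(z,t)=P_{B'}(z,t)$ by Theorem~\ref{ThmGJ}. In fact your write-up is considerably more detailed than the paper's two-sentence proof --- in particular, your observation that the shifts on the retained $x_{\pi_r}$ are computable from the first coordinates of $OverlapMap(\sigma,\pi,j)$ together with $|\pi|$ (since the chopped values are exactly $\{1,\ldots,|\pi|\}\setminus\{\pi_1,\ldots,\pi_j\}$) makes explicit the one point the paper leaves to the reader.
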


\begin{proof}
	The cluster recurrence was uniquely determined by how the patterns overlapped (which terms from one pattern overlapped with which terms of another pattern) and by how they reduced after ``chopping'' the last pattern from the current cluster. The possible ways that two patterns can overlap are encoded by OverlapMaps and the effect of the reduction $\rho$ is determined by how the patterns overlapped and the length of those patterns.
\end{proof}

\noindent This result was also independently discovered by Khoroshkin and Shapiro, ~\cite{KS}.\\

Using this result, it is possible to classify c-Wilf-Equivalences in some cases. For example, it is possible to classify single pattern avoidance for single patterns of length $3$, $4$, and $5$ since all the potential equivalences that occur can be demonstrated using Theorem ~\ref{OverlapThm}. Using the same approach, we can nearly classify all single patterns of length $6$. All that remains are four possible strong c-Wilf-Equivalences that appear true but cannot be rigorously proven through our means. They are the following:\\

\begin{enumerate}
	\item[(1)] The pattern $1 2 3 5 4 6$ appears to belong to the strong c-Wilf-Equivalence class $\{1 2 4 5 3 6, 1 2 5 4 3 6\}$.\\
	\item[(2)] The pattern $1 2 3 6 4 5$ appears to belong to the strong c-Wilf-Equivalence class $\{1 2 4 6 3 5, 1 2 6 4 3 5\}$.\\
	\item[(3)] The patterns $1 3 2 4 6 5$ and $1 4 2 3 6 5$ appear to be strong c-Wilf-Equivalent.\\
	\item[(4)] The patterns $1 5 4 2 6 3$ and $1 6 5 2 4 3$ appear to be strong c-Wilf-Equivalent.
\end{enumerate}

\noindent The four cases have been experimentally verified for up to length $18$ permutations.\\

\subsection{Maple Implementation}

The algorithm from Section ~\ref{CavAlg} has been implemented in the Maple package {\tt CAV}. Using that algorithm, you can find the sequence $\alpha(1), \ldots, \alpha(n)$ avoiding a set of patterns $B$ by calling the procedure {\tt CAV(B,n)}, where the patterns in $B$ are represented as lists. For example, for $n=10$ and $B = \{ 1 2 3, 3 2 1\}$, trying {\tt CAV(\{[1,2,3],[3,2,1]\},10);} returns the output:
\begin{eqnarray*}
	[1, 2, 4, 10, 32, 122, 544, 2770, 15872, 101042]
\end{eqnarray*}

\noindent To keep track of the occurrences of patterns from $B$, use the procedure {\tt CAVt(B,n,t)}. For example, trying {\tt CAVt(\{[1,2,3],[3,2,1]\},6,t);} returns the output:
\begin{eqnarray*}
	[1, 2, 4+2t, 10+12t+2t^2, 58t+28t^2+32+2t^3, 300t+236t^2+122+60t^3+2t^4]
\end{eqnarray*}

\noindent Also, most of the main procedures in the Maple {\tt CAV} package have an optional verbose setting. For example, for the verbose outputs, try  {\tt CAV(\{[1,2,3],[3,2,1]\},10,true);}.\\

% or {\tt CAVt(\{[1,2,3],[3,2,1]\},10,t,true);}.\\

To generate the cluster recurrence only (encoded in a data structure that we call a scheme), use the procedure {\tt SCHEME(k,B,x,y,t)}. For example, try {\tt SCHEME(k,\{[1,2,3],[3,2,1]\},x,y,t);}. The overlap maps between two patterns can also be found using {\tt OverlapMaps(p1,p2)}, where you are checking for overlaps between tails of {\tt p1} with heads of {\tt p2}. For example, try {\tt OverlapMaps([2,1,4,3], [2,1,4,3]).}\\

To (attempt to) classify pattern sets of $m$ patterns with each pattern length $n$, we can compute $\alpha(N)$ (for some fixed value $N$) for each of these pattern sets, and if the $\alpha(N)$ values coincide, try to apply Theorem ~\ref{OverlapThm}. This has been implemented in the procedure {\tt WilfEqm(n,N,m)}. For example, try {\tt WilfEqm(5,12,1)} (or for the verbose output, {\tt WilfEqm(5,12,1,true)}) to (rigorously) classify c-Wilf-Equivalence for all single patterns of length $5$. An additional byproduct of Theorem ~\ref{OverlapThm} is that all instances of c-Wilf-Equivalence in single length $5$ patterns are actually strong c-Wilf-Equivalence. The $25$ c-Wilf-Equivalence classes can be found on the paper's website.\\

Similarly, we can use the {\tt WilfEqm} procedure to discover the following:

\begin{prop}
	Let $B_{1}$ and $B_{2}$ both be sets containing two patterns of length $3$. Then $B_{1}$ is c-Wilf-Equivalent to $B_{2}$ if and only if they are trivially equivalent by reversal and/or complementation.
\end{prop}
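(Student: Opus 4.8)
The plan is to handle the two implications separately, reducing the ``only if'' direction to a finite computation.

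For the ``if'' direction, I will use the two elementary symmetries available for consecutive patterns. For $\pi \in \mathcal{S}_{n}$, let $\pi^{r} = \pi_{n} \cdots \pi_{1}$ be its reversal and $\pi^{c}$ its complement, $\pi^{c}_{i} = n + 1 - \pi_{i}$; both maps are involutions of $\mathcal{S}_{n}$, and they commute, so they generate a group $G$ of order $4$. A consecutive occurrence of a pattern $p$ in $\pi$ becomes a consecutive occurrence of $p^{r}$ in $\pi^{r}$ and of $p^{c}$ in $\pi^{c}$; hence, writing $B^{r} = \{ p^{r} : p \in B\}$ and similarly $B^{c}$, $B^{rc} = (B^{r})^{c}$, we get $\alpha_{B}(n) = \alpha_{B^{r}}(n) = \alpha_{B^{c}}(n) = \alpha_{B^{rc}}(n)$ for all $n$ (in fact $P_{B} = P_{B^{r}} = P_{B^{c}} = P_{B^{rc}}$). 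So any two pattern sets in the same $G$-orbit are c-Wilf-equivalent, which is precisely the ``if'' direction. (In contrast to classical avoidance, taking inverses does not preserve consecutive occurrences, so no further symmetries are available here.)

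For the ``only if'' direction, I first carry out the bookkeeping. There are $\binom{6}{2} = 15$ two-element subsets of $\mathcal{S}_{3}$, and a direct check of the $G$-action shows that these fall into exactly six orbits, with representatives
\[
\{123,132\},\quad \{123,231\},\quad \{123,321\},\quad \{132,213\},\quad \{132,231\},\quad \{132,312\},
\]
of sizes $4, 4, 1, 2, 2, 2$ respectively (and $4 + 4 + 1 + 2 + 2 + 2 = 15$). By the ``if'' direction, every pair of length-$3$ patterns is c-Wilf-equivalent to exactly one representative on this list, so the proposition follows once I show that no two of the six representatives are c-Wilf-equivalent.

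To separate the six representatives, I will compute the initial values of $\alpha_{B}(n)$ for each of them, using the algorithm of Section~\ref{CavAlg} (equivalently, Theorem~\ref{ThmGJ} directly, or the {\tt WilfEqm} procedure in {\tt CAV}), and record them in a short table. Since c-Wilf-equivalence of $B$ and $B'$ requires $\alpha_{B}(n) = \alpha_{B'}(n)$ for \emph{every} $n$, it is enough to point, for each pair of distinct representatives, to one value of $n$ at which the two counts differ; the computation shows the six sequences are in fact already pairwise distinct for small $n$. Together with the ``if'' direction, this proves the claim. I do not expect a genuine obstacle here, since everything reduces to a finite verification; the only real care needed is in getting the orbit enumeration right and in confirming that the six enumeration sequences genuinely diverge at the (small) values of $n$ one actually computes — which the cluster recurrence of Section~\ref{CavAlg} makes routine.
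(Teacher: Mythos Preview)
Your argument is correct and follows essentially the same approach as the paper: the paper's proof is literally ``Run {\tt WilfEqm(3,10,2,true);}'', which carries out precisely the finite computation you describe (distinguishing the six $G$-orbit representatives by their $\alpha$-values at small $n$, with the ``if'' direction being the trivial symmetry). Your write-up is simply a more explicit unpacking of what that procedure does, including the orbit enumeration, which you have carried out correctly.
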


\begin{proof}
	Run ``{\tt WilfEqm(3,10,2,true);}'' using the {\tt CAV} Maple package.
\end{proof}

\begin{prop}
	Let $B_{1}$ and $B_{2}$ both be sets containing two patterns of length $4$. Then $B_{1}$ is c-Wilf-Equivalent to $B_{2}$ if and only if they are trivially equivalent by reversal and/or complementation.
\end{prop}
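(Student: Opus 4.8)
The proof splits into the two implications. The ``if'' direction does not need the cluster machinery: reversal $\sigma\mapsto\sigma_{n}\cdots\sigma_{1}$ and complementation $\sigma\mapsto(n{+}1{-}\sigma_{1})\cdots(n{+}1{-}\sigma_{n})$ are involutions of $\mathcal{S}_{n}$ carrying a consecutive occurrence of a pattern $p$ to a consecutive occurrence of $p^{r}$, respectively $p^{c}$. Hence $\alpha_{B}(n)=\alpha_{B^{r}}(n)=\alpha_{B^{c}}(n)$ for every $n$ and every pattern set $B$, so any two sets related by a composition of reversal and complementation have the same sequence $(\alpha(n))_{n\ge 0}$ and are c-Wilf-Equivalent. (For consecutive patterns, unlike the classical setting, taking inverses is \emph{not} a symmetry, so ``trivially equivalent'' here means related by the Klein four-group $G=\{\mathrm{id},r,c,rc\}$; one could also note via Theorem~\ref{OverlapThm} that the resulting equivalence is strong, but this is not needed.)

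For the ``only if'' direction the plan is a finite computation --- precisely what the {\tt WilfEqm} procedure automates. There are $\binom{24}{2}=276$ sets of two distinct length-$4$ patterns, and none of them has a redundancy. For each such set $B$, run the algorithm of Section~\ref{CavAlg} (generate the two cluster recurrences, then iterate Equation~\eqref{AvRecur}) to produce the vector $v(B)=(\alpha_{B}(1),\dots,\alpha_{B}(N))$ for a fixed cutoff $N$; taking $N$ around $10$ is ample. Partition the $276$ sets into blocks according to the value of $v(B)$. The assertion to be checked by the computation is that each block is exactly one orbit of $G$.

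Granting this, the implication follows cleanly. By the ``if'' direction every $G$-orbit lies inside a single block, and the computation gives the reverse inclusion, so blocks and orbits coincide. Therefore, if $B_{1}$ and $B_{2}$ are c-Wilf-Equivalent then $A_{B_{1}}(z)=A_{B_{2}}(z)$, so $v(B_{1})=v(B_{2})$, so $B_{1}$ and $B_{2}$ sit in the same block, hence in the same $G$-orbit; that is, they are trivially equivalent.

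There is nothing conceptually delicate here; the one point requiring attention is the choice of $N$. To separate a non-trivially-equivalent pair $(B_{1},B_{2})$ we only need a single index $n\le N$ with $\alpha_{B_{1}}(n)\ne\alpha_{B_{2}}(n)$, so $N$ must merely be large enough that no accidental numerical coincidence between two genuinely distinct orbits persists up to length $N$. The {\tt WilfEqm(4,N,2,true)} run confirms that a modest $N$ suffices, and the resulting list of c-Wilf-Equivalence classes (posted on the paper's website) can be inspected directly.
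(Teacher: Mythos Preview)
Your proposal is correct and follows essentially the same route as the paper: the paper's proof is the one-line instruction to run {\tt WilfEqm(4,10,2,true)}, and you have simply unpacked what that computation does---the trivial symmetry argument for the ``if'' direction, and the finite numerical separation of the $\binom{24}{2}$ pattern sets for the ``only if'' direction. The only cosmetic difference is that the paper's {\tt WilfEqm} invokes Theorem~\ref{OverlapThm} rather than the bare reversal/complementation symmetry to certify equivalences within a block, but since the proposition asserts that every block is already a single $G$-orbit, the two certifications coincide here.
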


\begin{proof}
	Run ``{\tt WilfEqm(4,10,2,true);}'' using the {\tt CAV} Maple package.
\end{proof}

\begin{prop}
	Let $B_{1}$ and $B_{2}$ both be sets containing three patterns of length $3$. Then $B_{1}$ is c-Wilf-Equivalent to $B_{2}$ if and only if they are trivially equivalent by reversal and/or complementation.
\end{prop}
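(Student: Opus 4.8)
The plan is to reduce the statement to a finite computation, exactly as in the two preceding propositions, but with the bookkeeping made explicit. There are $3! = 6$ patterns of length $3$, hence $\binom{6}{3} = 20$ sets $B$ consisting of three such patterns. For the ``if'' direction, recall that the reversal map $\sigma \mapsto \sigma^{r}$ (with $\sigma^{r}_{i} = \sigma_{n+1-i}$) and the complement map $\sigma \mapsto \sigma^{c}$ (with $\sigma^{c}_{i} = n+1-\sigma_{i}$) are involutions on $\mathcal{S}_{n}$, and each carries a consecutive occurrence of $p$ in $\sigma$ to a consecutive occurrence of $p^{r}$ (resp.\ $p^{c}$) in $\sigma^{r}$ (resp.\ $\sigma^{c}$). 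Consequently $\alpha_{B}(n) = \alpha_{B^{r}}(n) = \alpha_{B^{c}}(n)$ for every $n$, where $B^{r} = \{p^{r} : p \in B\}$ and similarly for $B^{c}$. Thus if $B_{1}$ and $B_{2}$ are trivially equivalent, i.e.\ they lie in the same orbit of the Klein four-group $\langle r, c\rangle$ acting on three-element pattern sets, then $A_{B_{1}}(z) = A_{B_{2}}(z)$.

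For the ``only if'' direction, I would partition the $20$ sets into their orbits under $\langle r, c\rangle$ and, for one representative $B$ of each orbit, compute the sequence $\alpha_{B}(1), \ldots, \alpha_{B}(N)$ for a fixed $N$ (say $N = 10$) using the recurrence \eqref{AvRecur} together with the cluster recurrences built as in Section~\ref{CavAlg}; in the {\tt CAV} package this is the single call {\tt WilfEqm(3,10,3,true)}. The point to extract is that the resulting sequences are pairwise distinct across the orbits. Since c-Wilf-equivalence of $B_{1}$ and $B_{2}$ would in particular force $\alpha_{B_{1}}(n) = \alpha_{B_{2}}(n)$ for all $n \le N$, distinctness of the finitely many orbit representatives already at level $N$ rules out any c-Wilf-equivalence between sets in different orbits. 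Combined with the previous paragraph, this establishes the claimed ``if and only if''.

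The main obstacle is one of soundness rather than combinatorics: a priori two inequivalent sets could have agreeing $\alpha$-values for all $n \le 10$ and diverge only afterwards, which would make the computation at $N = 10$ inconclusive. In practice this does not occur — the orbit representatives are already separated well before $n = 10$ — and were it to occur one would simply raise $N$, since each orbit contributes exactly one sequence and there are only finitely many of them. One may additionally observe, via Theorem~\ref{OverlapThm}, that no \emph{further} strong c-Wilf-equivalences are forced by coincidences among the {\tt OverlapMaps} data of these sets, which is consistent with the conclusion that for three patterns of length $3$ the only equivalences are the trivial ones.
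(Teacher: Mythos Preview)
Your proposal is correct and follows exactly the paper's approach: the paper's entire proof is the single line ``Run {\tt WilfEqm(3,10,3,true);} using the {\tt CAV} Maple package,'' and you have simply made explicit the finite check that this procedure performs (orbit decomposition under $\langle r,c\rangle$ and separation of orbit representatives by their $\alpha$-sequences through $n=10$). Your added discussion of the soundness caveat is a worthwhile clarification but does not constitute a different method.
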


\begin{proof}
	Run ``{\tt WilfEqm(3,10,3,true);}'' using the {\tt CAV} Maple package.
\end{proof}

\noindent Similarly, nearly all c-Wilf-Equivalences could be classified for sets containing three patterns of length $4$. Four pairs of sets appear c-Wilf-Equivalent but cannot be proven through our means. They are the following:\\

\begin{enumerate}
	\item[(1)] The pattern sets $\{1 2 3 4, 1 2 4 3, 1 3 4 2\}$ and $\{1 2 3 4, 1 2 4 3, 1 4 3 2\}$ appear to be strongly c-Wilf-Equivalent.\\
	\item[(2)] The pattern sets $\{1 2 3 4, 1 2 4 3, 2 3 4 1\}$ and $\{1 2 3 4, 1 2 4 3, 2 4 3 1\}$ appear to be strongly c-Wilf-Equivalent.\\
	\item[(3)] The pattern sets $\{1 3 2 4, 1 3 4 2, 1 4 2 3\}$ and $\{1 3 2 4, 1 4 2 3, 1 4 3 2\}$ appear to be strongly c-Wilf-Equivalent.\\
	\item[(4)] The pattern sets $\{1 3 2 4, 1 4 2 3, 2 3 4 1\}$ and $\{1 3 2 4, 1 4 2 3, 2 4 3 1\}$ appear to be strongly c-Wilf-Equivalent.\\
\end{enumerate}

\noindent The four cases have been experimentally verified for up to length $14$ permutations, and the rest of the classification can be found on the paper's website.\\

%%% NEW SECTION %%%
\section{Consecutive Pattern Avoidance via the Cluster Tail Generating Function}\label{SectCAVT}

Computationally, the cluster recurrence is faster than the naive approach of checking every single permutation, but the approach is still very inefficient. For a fixed length $k$, not every combination of tails gives rise to a possible cluster. For example, if $B = \{ 1 2 3 \}$, the only possible underlying permutation in a length $9$ cluster is $1 2 3 4 5 6 7 8 9$. The only possible tail is $7 8 9$, but using the recurrence, we essentially try all $9 \choose 3$ possible tails. Each such possible tail gives its contribution of $0$ only after it has recursed down to the base cases of $k \leq 3$.\\

We can, however, gain a substantial speed-up by considering a more complicated generating function. For a fixed pattern $p \in B$ with length $m$, the cluster tail generating function will be defined as:

\begin{eqnarray}
	F(k,p; [z_{1}, \ldots, z_{m}]) = \mathop{\sum} \limits_{1 \leq x_{1} < \ldots < x_{m} \leq k} {C(k,p; [x_{1}, \ldots, x_{m}]) z_{1}^{x_{1}} \cdots z_{m}^{x_{m}} }\label{Fkpz}
\end{eqnarray}

\noindent If $B$ is a single pattern set, this may also be denoted as $F(k; [z_{1}, \ldots, z_{m}])$. Otherwise, we also define:

\begin{eqnarray}
	F(k; [z_{1}, \ldots, z_{m}]) = \mathop{\sum} \limits_{p \in B} {F(k,p; [z_{1}, \ldots, z_{m}])}
\end{eqnarray}

\noindent Note that $F(k,p; [1, \ldots, 1]) = C(k,p)$ and $F(k; [1, \ldots, 1]) = C(k)$. In fact, we can always find a functional equation for $F(k,p; [z_{1}, \ldots, z_{m}])$ of a certain form. We can then combine this with Equation \eqref{AvRecur} to more quickly compute $\alpha(n)$. We begin with an illustrative example and then present the general algorithm.\\

\subsection{Example}\label{CTGFExSect}

Let $B = \{1 3 2\}$ and suppose we want to only count permutations that completely avoid $1 3 2$. We will set $t=0$ which gives us $weight(1 3 2) = -1$. We then can find a functional equation for $F(k; [z_{1}, z_{2}, z_{3}])$ as follows. Using the procedure {\tt SCHEME} in the Maple package {\tt CAV}, we can get the following cluster recurrence:

\begin{eqnarray*}
	C(k;[x_{1}, x_{2}, x_{3}]) & = & -\mathop{\sum} \limits_{\substack{1 \leq y_{1} < y_{2} < y_{3} \leq k-2\\ y_{2}=x_{1}}} C(k-2;[y_{1}, y_{2}, y_{3}])\\
	& = & -\mathop{\sum} \limits_{\substack{1 \leq y_{1} < x_{1}\\ x_{1} < y_{3} \leq k-2}} C(k-2;[y_{1}, x_{1}, y_{3}])
\end{eqnarray*}

\noindent with the base cases $C(k;[x_{1}, x_{2}, x_{3}]) = 0$ if $k < 3$ and $C(k;[x_{1}, x_{2}, x_{3}]) = -1$ if $k = 3$. Substituting into Equation \eqref{Fkpz} and applying the finite geometric series formula as needed, we get:

\begin{eqnarray*}
	F(k;[z_{1}, z_{2}, z_{3}]) & = & \mathop{\sum} \limits_{1 \leq x_{1} < x_{2} < x_{3} \leq k} {C(k; [x_{1}, x_{2}, x_{3}]) z_{1}^{x_{1}} z_{2}^{x_{2}} z_{3}^{x_{3}} }\\
	& = & - \mathop{\sum} \limits_{x_{1}=1}^{k-2} \mathop{\sum} \limits_{x_{2}=x_{1}+1}^{k-1} \mathop{\sum} \limits_{x_{3}=x_{2}+1}^{k} \mathop{\sum} \limits_{\substack{1 \leq y_{1} < x_{1}\\ x_{1} < y_{3} \leq k-2}} C(k-2;[y_{1}, x_{1}, y_{3}]) z_{1}^{x_{1}} z_{2}^{x_{2}} z_{3}^{x_{3}}\\
	& = & - \mathop{\sum} \limits_{x_{1}=1}^{k-2} \mathop{\sum} \limits_{x_{2}=x_{1}+1}^{k-1} \mathop{\sum} \limits_{\substack{1 \leq y_{1} < x_{1}\\ x_{1} < y_{3} \leq k-2}} C(k-2;[y_{1}, x_{1}, y_{3}]) z_{1}^{x_{1}} z_{2}^{x_{2}} \mathop{\sum} \limits_{x_{3}=x_{2}+1}^{k} z_{3}^{x_{3}}\\
	& = & - \frac{z_{3}}{1-z_{3}} \mathop{\sum} \limits_{x_{1}=1}^{k-2} \mathop{\sum} \limits_{x_{2}=x_{1}+1}^{k-1} \mathop{\sum} \limits_{\substack{1 \leq y_{1} < x_{1}\\ x_{1} < y_{3} \leq k-2}} C(k-2;[y_{1}, x_{1}, y_{3}]) z_{1}^{x_{1}} z_{2}^{x_{2}} (z_{3}^{x_{2}} - z_{3}^{k})\\
	& = & - \frac{z_{3}}{1-z_{3}} \mathop{\sum} \limits_{x_{1}=1}^{k-2} \mathop{\sum} \limits_{\substack{1 \leq y_{1} < x_{1}\\ x_{1} < y_{3} \leq k-2}} C(k-2;[y_{1}, x_{1}, y_{3}]) z_{1}^{x_{1}} \mathop{\sum} \limits_{x_{2}=x_{1}+1}^{k-1} z_{2}^{x_{2}} (z_{3}^{x_{2}} - z_{3}^{k})
\end{eqnarray*}

\noindent and since 

\begin{eqnarray*}
	\mathop{\sum} \limits_{x_{2}=x_{1}+1}^{k-1} z_{2}^{x_{2}} (z_{3}^{x_{2}} - z_{3}^{k}) = \left( \frac{(z_{2} z_{3})^{x_{1}+1} - (z_{2} z_{3})^{k}}{1 - z_{2} z_{3}} - z_{3}^{k} \frac{z_{2}^{x_{1}+1} - z_{2}^{k}}{1 - z_{2}} \right)
\end{eqnarray*}

\noindent we get

\begin{eqnarray*}
	F(k;[z_{1}, z_{2}, z_{3}]) & = & - \frac{z_{2} z_{3}^{2}}{(1-z_{3}) (1 - z_{2} z_{3})} \mathop{\sum} \limits_{x_{1}=1}^{k-2} \mathop{\sum} \limits_{\substack{1 \leq y_{1} < x_{1}\\ x_{1} < y_{3} \leq k-2}} C(k-2;[y_{1}, x_{1}, y_{3}]) (z_{1} z_{2} z_{3})^{x_{1}} \\
	& & + \frac{z_{2}^{k} z_{3}^{k+1}}{(1-z_{3}) (1 - z_{2} z_{3})} \mathop{\sum} \limits_{x_{1}=1}^{k-2} \mathop{\sum} \limits_{\substack{1 \leq y_{1} < x_{1}\\ x_{1} < y_{3} \leq k-2}} C(k-2;[y_{1}, x_{1}, y_{3}]) z_{1}^{x_{1}}\\
	& & + \frac{z_{2} z_{3}^{k+1}}{(1 - z_{3}) (1 - z_{2})} \mathop{\sum} \limits_{x_{1}=1}^{k-2} \mathop{\sum} \limits_{\substack{1 \leq y_{1} < x_{1}\\ x_{1} < y_{3} \leq k-2}} C(k-2;[y_{1}, x_{1}, y_{3}]) (z_{1} z_{2})^{x_{1}}\\
	& & - \frac{z_{2}^{k} z_{3}^{k+1}}{(1 - z_{3}) (1 - z_{2})} \mathop{\sum} \limits_{x_{1}=1}^{k-2} \mathop{\sum} \limits_{\substack{1 \leq y_{1} < x_{1}\\ x_{1} < y_{3} \leq k-2}} C(k-2;[y_{1}, x_{1}, y_{3}]) z_{1}^{x_{1}}\\
	& = & - \frac{z_{2} z_{3}^{2}}{(1-z_{3}) (1 - z_{2} z_{3})} F(k-2; [1, z_{1} z_{2} z_{3}, 1])\\
	& & + \frac{z_{2}^{k} z_{3}^{k+1}}{(1-z_{3}) (1 - z_{2} z_{3})} F(k-2;[1, z_{1}, 1])\\
	& & + \frac{z_{2} z_{3}^{k+1}}{(1 - z_{3}) (1 - z_{2})} F(k-2;[1, z_{1} z_{2},1])\\
	& & - \frac{z_{2}^{k} z_{3}^{k+1}}{(1 - z_{3}) (1 - z_{2})} F(k-2;[1,z_{1},1]).
\end{eqnarray*}

We can then use the functional equation to compute $C(k) = F(k;[1,1,1])$ for whatever $k$ we need and then find $\alpha(n)$ for the desired $n$ by Equation \eqref{AvRecur}.\\

\subsection{General Algorithm}\label{TailAlg} 

In general, if we can find a functional equation for $F(k,p; [z_{1}, \ldots, z_{m}])$ that relates it to cluster generating functions with lower order `$k$', we can use it to compute $\alpha(n,t)$ using Equation \eqref{AvRecur}. One can see that most of what was done in the above example can be extended to any pattern (or pattern set by finding a functional equation for each pattern individually). The outline of the general procedure is as follows:\\

First, find the cluster recurrence for the initial summand $C(k,p; [x_{1}, \ldots, x_{m}])$ (as in Section ~\ref{CavAlg}) and substitute this into the summation in Equation \eqref{Fkpz}. Split the summation over each summand $C(k^{\prime},p^{\prime}; [y_{1}, \ldots, y_{m^{\prime}}])$, and handle each one separately. Rewrite the summations over $x_{1}, \ldots, x_{m}$ and apply the finite geometric series formula as needed. Finally, express the remaining summations as cluster tail generating functions of lower order $k^{\prime}$. \\

The only part that is not immediate is whether the summations for $x_{1}, \ldots, x_{m}$ can be ordered properly and whether the lower and upper bounds for each summation index can be chosen properly so that we can adequately apply the finite geometric series formula. This can in fact always be done, and the ordering and choice of bounds can be done as follows:\\

Let $x_{i_{1}}, \ldots, x_{i_{j}}$ be the entries from the original last pattern $p$ in the length $k$ cluster that coincide with entries from the new last pattern $p^{\prime}$ in the length $k^{\prime}$ cluster. In other words, $x_{i_{1}}, \ldots, x_{i_{j}}$ are the terms that occur in the $y_{i}$'s of  $C(k^{\prime},p^{\prime}; [y_{1}, \ldots, y_{m^{\prime}}])$ . Let $x_{i_{j+1}}, \ldots, x_{i_{m}}$ be the terms that were ``chopped off'' from the length $k$ cluster. Also, assume that $x_{i_{1}} < \ldots < x_{i_{j}}$ and $x_{i_{j+1}} < \ldots < x_{i_{m}}$. Note that in the example in Section ~\ref{CTGFExSect}, $x_{i_{1}} = x_{1}$ (not ``chopped'') while $x_{i_{2}} = x_{2}$ (``chopped'') and $x_{i_{3}} = x_{3}$ (``chopped'').\\

\noindent \textbf{Order of summations:}\\
The summations will be ordered (from outermost to innermost) as $x_{i_{1}}$ to $x_{i_{j}}$ followed by $x_{i_{j+1}}$ to $x_{i_{m}}$. Thus, the outermost summation is indexed by $x_{i_{1}}$, the next summation inward is indexed by $x_{i_{2}}$, and so on. This places the summations over $x_{i_{j+1}}, \ldots, x_{i_{m}}$ to be on the ``inside'' so that they can be moved inward to apply the finite geometric series formula.\\

\noindent \textbf{Lower/Upper bounds for $x_{i_{1}}, \ldots, x_{i_{m}}$:}\\
\noindent For each $l$ with $j+1 \leq l \leq m$, let $b_{l} = k$ if $i_{l} > i_{j}$; otherwise, let $b_{l} = \min(\{ i_{1}, \ldots, i_{j}\} \backslash \{1, \ldots, i_{l} \})$, and let $c_{l}$ be the index of $i$ (so $i_{c_{l}} = b_{l}$).\\

\noindent \textit{For $x_{i_{1}}, \ldots, x_{i_{j}}$}:

\begin{eqnarray*}
	x_{i_{1}} & = & i_{1} \text{ to } k-m+i_{1}\\
	x_{i_{2}} & = & x_{i_{1}}+i_{2}-i_{1} \text{ to } k-m+i_{2}\\
	& & \cdots\\
	x_{i_{j}} & = & x_{i_{j-1}}+i_{j}-i_{j-1} \text{ to } k-m+i_{j}
\end{eqnarray*}

\noindent \textit{For $x_{i_{j+1}}$}:\\
\noindent Lower bound is $1$ if $i_{j+1} = 1$, and $x_{i_{j+1}-1}+1$ otherwise. Upper bound is $k-m+i_{j+1}$ if $b_{j+1}=k$, and $b_{j+1}-c_{j+1}+i_{j+1}$ otherwise.\\

\noindent \textit{For $x_{i_{l}}$ with $l>j+1$}:\\
\noindent Lower bound is $x_{i_{l}-1}+1$. Upperbound is $k-m+i_{l}$ if $b_{l}=k$, and $b_{l}-c_{l}+i_{l}$ otherwise.\\

One can see that the indices $x_{i_{1}}, \ldots, x_{i_{j}}$ range over all necessary values and can also verify that $x_{i_{j+1}}, \ldots, x_{i_{m}}$ will cover all necessary values as well. Additionally, for each $r$, the lower and upper bounds for $x_{i_{r}}$ never depends on any $x_{i_{s}}$ where $s > r$. If we applied the above approach to the example in Section ~\ref{CTGFExSect}, we would get $x_{i_{1}} = x_{1}$ going from $1$ to $k-2$, $x_{i_{2}} = x_{2}$ going from $x_{1} + 1$ to $k-1$, and $x_{i_{3}} = x_{3}$ going from $x_{2} + 1$ to $k$.\\

\subsection{Additional Results}

We get a couple more immediate byproducts from the algorithm in Section \ref{TailAlg}. First, the method provided for finding a functional equation always works, so we get the following:

\begin{thm}
	Let $B$ be a pattern set and $p \in B$. Then, there always exists a functional equation for $F(k,p;[z_{1}, \ldots, z_{|p|}])$ of the form:
	\begin{eqnarray*}
		F(k,p;[z_{1}, \ldots, z_{|p|}]) = (t-1) \mathop{\sum} \limits_{p^{\prime} \in B} \mathop{\sum} \limits_{i \in I(p^{\prime})} {R_{i} \cdot F(k_{i},p^{\prime}; [M^{i}_{1}, \ldots, M^{i}_{|p^{\prime}|}])}
	\end{eqnarray*}
	where $I(p^{\prime})$ is a finite index set for each $p^{\prime} \in B$, $I(p^{\prime})$ and $I(p^{\prime \prime})$ are disjoint if $p^{\prime} \neq p^{\prime \prime}$, each $M^{i}_{j}$ is a specific monomial in $z_{1}, \ldots, z_{|p|}$, each $R_{i}$ is a specific rational expression in $z_{1}, \ldots, z_{|p|}$, and $k_{i} < k$ for each $i$.
\end{thm}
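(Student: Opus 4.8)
The plan is to verify that the construction outlined in Section~\ref{TailAlg} produces exactly a functional equation of the claimed form, so the proof is essentially a matter of carefully tracking the shape of each term as the recursion is unwound once. First I would start from the cluster recurrence for $C(k,p;[x_1,\ldots,x_m])$ guaranteed by Section~\ref{CavAlg}: this expresses $C(k,p;[x_1,\ldots,x_m])$ as a sum, over $p'\in B$ and over each valid overlap length between the tail of $p'$ and the head of $p$, of $weight(p)\cdot C(k',p';[y_1,\ldots,y_{m'}])$, where $weight(p)=t-1$, where $k'<k$ is determined by the overlap length, and where the $y_i$'s are obtained from the surviving $x$-entries by the reduction shift described in the text. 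Substituting this into the defining sum~\eqref{Fkpz} for $F(k,p;[z_1,\ldots,z_m])$ and interchanging the (finite) summations, I get $F(k,p;[z_1,\ldots,z_m]) = (t-1)\sum_{p'\in B}\sum_{\text{overlaps}} \sum_{1\le x_1<\cdots<x_m\le k} C(k',p';[y_1,\ldots,y_{m'}])\, z_1^{x_1}\cdots z_m^{x_m}$, and it remains to show each inner term collapses into a rational multiple of some $F(k',p';[M_1,\ldots,M_{m'}])$.

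Next I would carry out exactly the reindexing prescribed in Section~\ref{TailAlg}: split the $x$-indices into the ``surviving'' block $x_{i_1}<\cdots<x_{i_j}$ (those appearing among the $y$'s) and the ``chopped'' block $x_{i_{j+1}}<\cdots<x_{i_m}$, order the summations with the surviving block outermost and the chopped block innermost, and install the explicit lower/upper bounds $b_l,c_l$ given there. The key structural fact, already asserted in the excerpt, is that with this ordering the bound on each $x_{i_r}$ never involves any $x_{i_s}$ with $s>r$; I would make this precise by a short case check on whether $i_l>i_j$ (giving an upper bound $k-m+i_l$, independent of all other indices) or not (giving $b_l-c_l+i_l$, which depends only on earlier surviving indices via $b_l$). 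Consequently the innermost summations, over $x_{i_{j+1}},\ldots,x_{i_m}$, are each a geometric sum in a single variable with constant limits, so applying the finite geometric series formula $\sum_{a}^{b} r^x = (r^a - r^{b+1})/(1-r)$ repeatedly eliminates them, each application multiplying by a factor $1/(1-(\text{monomial in the }z\text{'s}))$ and replacing an upper or lower limit by a monomial evaluated at the remaining indices. After clearing all $m-j$ chopped indices we are left with a finite linear combination (the finitely many choices of which limit was kept at each step) of sums of the form $\sum_{x_{i_1}<\cdots<x_{i_j}} C(k',p';[y_1,\ldots,y_{m'}])\cdot(\text{monomials})^{x_{i_1}}\cdots(\text{monomials})^{x_{i_j}}$, with rational prefactors in $z_1,\ldots,z_m$. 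Finally, using the reduction-shift formula for the $y_i$'s to substitute $y$'s for $x$'s as the summation variables — and noting that the shift is a fixed translation so it only contributes constant monomial factors — each such sum is, by the definition~\eqref{Fkpz}, precisely $F(k',p';[M_1,\ldots,M_{m'}])$ for suitable monomials $M_1,\ldots,M_{m'}$; collecting all terms over all $p'$ and all overlaps, and labeling the resulting finite collection of terms by disjoint index sets $I(p')$, yields the stated equation.

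The main obstacle, and the part deserving the most care, is the claim that the summations over the chopped indices can always be arranged as single-variable geometric sums with limits independent of the not-yet-summed indices. This is the crux because if some upper limit genuinely depended on an inner index, the geometric series formula would not apply and the whole scheme would fail. I expect to handle this exactly as sketched above: the surviving indices $x_{i_1},\ldots,x_{i_j}$ inherit the standard ``staircase'' bounds $x_{i_{r-1}}+i_r-i_{r-1}$ to $k-m+i_r$ from the constraint $1\le x_1<\cdots<x_m\le k$, and the only subtlety is that a chopped index $x_{i_l}$ with $i_l$ lying between two surviving positions has its natural upper bound governed by the next surviving position, which is captured by $b_l=\min(\{i_1,\ldots,i_j\}\setminus\{1,\ldots,i_l\})$ and the corresponding rescaling $b_l-c_l+i_l$; since $b_l$ refers only to surviving indices, which are all summed on the outside, the dependence is only on earlier variables, as required. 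A secondary bookkeeping point is checking that $I(p')$ and $I(p'')$ are disjoint for $p'\neq p''$, which is immediate once we index terms by the triple (choice of $p'$, choice of overlap, choice of limits kept). Everything else — that $k_i<k$, that the $R_i$ are rational in the $z$'s, that the $M^i_j$ are monomials — is then a direct reading-off from the construction.
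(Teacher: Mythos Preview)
Your proposal is correct and follows essentially the same approach as the paper: the paper simply asserts that ``the method provided for finding a functional equation always works'' and states the theorem as an immediate byproduct of the algorithm in Section~\ref{TailAlg}, while you supply the detailed verification that each step of that algorithm produces terms of the claimed shape. In particular, your identification of the crux --- that with the prescribed ordering and bounds the chopped indices can always be summed geometrically because their limits depend only on outer (surviving) indices --- is exactly the content the paper leaves implicit, and your final recognition of the remaining sums as $F(k',p';[M_1,\ldots,M_{m'}])$ matches the example computation.
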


\noindent Additionally, we get an immediate corollary of Theorem~\ref{ThmGJ}.

\begin{cor}
	Let $B$ be a set of patterns we would like to avoid. Without loss of generality, assume that $B$ contains no redundancies. Then by setting $weight(p) = t-1$ for each $p \in B$, we get:
	\begin{eqnarray*}
		P(z,t) = \frac{1}{1 - z - \mathop{\sum} \limits_{k \geq 1} {\mathop{\sum} \limits_{p \in B} {F(k,p;[1, \ldots, 1]) \frac{z^{k}}{k!}}} }.
	\end{eqnarray*}
\end{cor}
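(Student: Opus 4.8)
The plan is to derive this directly from Theorem~\ref{ThmGJ} by unwinding the definitions of $C(k)$, $C(k,p)$, and $F(k,p;[z_1,\ldots,z_{|p|}])$, so the only real content is bookkeeping. First I would recall that Theorem~\ref{ThmGJ} gives $P(z,t) = \bigl(1 - z - \sum_{k\geq 1} C(k)\,z^k/k!\bigr)^{-1}$ with $C(k) = \sum_{w\in\mathcal{C}_k} weight(w)$, and that by condition (b) in the definition of a cluster every $w = (p;[[i_1,j_1],\ldots,[i_m,j_m]]) \in \mathcal{C}_k$ has $j_m = k$. Consequently the ``last marked pattern'' $\rho(p_{i_m}\cdots p_{j_m})$ is a single well-defined element of $B$, so the sets $\mathcal{C}_k[p]$ of length $k$ clusters ending in $p$ partition $\mathcal{C}_k$ as $p$ ranges over $B$. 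Summing $weight$ over this partition yields $C(k) = \sum_{p\in B} C(k,p)$, exactly as recorded in Section~\ref{CavAlg}.

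Next I would note that setting $weight(p) = t-1$ for every $p\in B$ is consistent with the weighting $weight(w) = (t-1)^m$ used in Theorem~\ref{ThmGJ}: a length $k$ cluster with $m$ marked intervals contributes $(t-1)^m$, which is the product of the single-pattern weights of its $m$ marked patterns, in agreement with the multiplicative rule $weight(w) = weight(p_1)\cdot weight(w')$ underlying the cluster recurrence of Section~\ref{CavAlg}. Under this weighting, refining the sum in~\eqref{Ckp} according to the set $\{x_1 < \cdots < x_{|p|}\}$ of last $|p|$ entries of the underlying permutation gives $C(k,p) = \sum_{1\le x_1 < \cdots < x_{|p|}\le k} C(k,p;[x_1,\ldots,x_{|p|}])$. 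Comparing with~\eqref{Fkpz}, specializing all $z_i$ to $1$ collapses each monomial $z_1^{x_1}\cdots z_{|p|}^{x_{|p|}}$ to $1$, so $F(k,p;[1,\ldots,1]) = C(k,p)$, as already observed after the definition of $F$.

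Finally I would combine these facts: $C(k) = \sum_{p\in B} C(k,p) = \sum_{p\in B} F(k,p;[1,\ldots,1])$, and substituting this into the denominator of the formula from Theorem~\ref{ThmGJ} gives the claimed identity. There is no genuine obstacle here; the one point worth stating carefully is the disjointness of the decomposition $\mathcal{C}_k = \bigsqcup_{p\in B}\mathcal{C}_k[p]$, which relies on the last marked interval being forced to end at position $k$, together with the (harmless) observation that the $z_i \mapsto 1$ specialization of $F$ is exactly $C(k,p)$.
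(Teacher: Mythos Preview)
Your argument is correct and is exactly the approach the paper intends: the corollary is stated as an ``immediate corollary of Theorem~\ref{ThmGJ},'' with no separate proof given, relying on the already-noted facts $F(k,p;[1,\ldots,1]) = C(k,p)$ and $C(k) = \sum_{p\in B} C(k,p)$. You have simply spelled out the bookkeeping (including the disjointness of $\mathcal{C}_k = \bigsqcup_{p\in B}\mathcal{C}_k[p]$) that the paper leaves implicit.
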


\noindent Given that we can find a functional equation given any pattern set $B$, in a sense, we have an expression for the exponential generating function $P(z,t)$ for any pattern set.

\subsection{Maple Implementation}

The algorithm from Section \ref{TailAlg} has also been implemented in the Maple package {\tt CAV}. Using that algorithm, you can find the sequence $\alpha(1), \ldots, \alpha(n)$ avoiding a set of patterns $B$ by calling the procedure {\tt CAVT(B,n)}, where the patterns in $B$ are represented as lists. For example, for $n=10$ and $B = \{ 1 2 3, 3 2 1\}$, try {\tt CAVT(\{[1,2,3],[3,2,1]\},10);}. To keep track of the occurrences of patterns from $B$, use the procedure {\tt CAVTt(B,n,t)}. For example, try {\tt CAVTt(\{[1,2,3],[3,2,1]\},10,t);}. To generate the cluster tail functional equation only (encoded again in a data structure that we call a scheme), use the procedure {\tt MakeTailFE(B,k,z,t)}. For example, try {\tt MakeTailFE(\{[1,3,2]\},k,z,t);}. \\

Computationally, the algorithm in Section \ref{TailAlg} is much more efficient than the one in Section \ref{CavAlg}, so the {\tt CAVT} procedure is much faster than the {\tt CAV} procedure. In general, {\tt CAVT} should be used instead of {\tt CAV} for computing $\alpha(n)$ values and, similarly, {\tt CAVTt} should be used instead of {\tt CAVt} for $\alpha(n,t)$.

\section{Asymptotic Approximations Using CAV}\label{SectAsym}

Let $B = \{ p \}$ be a set containing a single pattern. In ~\cite{RW}, Warlimont gave a conjecture on the asymptotics of $\alpha(n)$:

\begin{eqnarray}
	\alpha(n) \sim \gamma \cdot \rho^{n} \cdot n!
\end{eqnarray}\\

\noindent where $\gamma$ and $\rho$ are constants depending only on the single pattern $p$. Some initial asymptotic results for $\alpha(n)$ were proven by Elizalde in ~\cite{SE}. Recently, Ehrenborg, Kitaev, and Perry prove this conjecture in ~\cite{EKP}. With this result established, we can compute approximate values of $\gamma$ and $\rho$ for various single patterns.\\

Elizalde and Noy gave some approximations of $\gamma$ and $\rho$ for length $3$  and a few length $4$ patterns in ~\cite{EN}. Aldred, Atkinson, and McCaughan also gave approximations for the $\rho$ values of the single length $4$ patterns. Using the Maple package {\tt CAV}, we can empirically verify these approximations and also quickly produce many new approximations. For example, the procedure {\tt AsymApprox(p,N,d)} will give approximate values (up to $d$ decimal digits) for $\gamma$ and $\rho$ for the pattern $p$ by computing $\alpha(N-2)$, $\alpha(N-1)$, and $\alpha(N)$ and computing their ratios. For example, try {\tt AsymApprox([1,2,4,3],50,20)}.\\

To approximate $\gamma$ and $\rho$ values (up to $d$ decimal digits) for all length $n$ patterns and then rank them by the size of $\rho$, use {\tt AsymApproxRank(n,N,d)}. For example, {\tt AsymApproxRank(4,30,10)} gives us the approximations for the $\gamma$ and $\rho$ values for length $4$ patterns:\\

\begin{table}[h!b!p!]
	\begin{tabular}{|c | c | c|}
		\hline
		Pattern & $\gamma$ & $\rho$\\ \hline
		1 2 3 4 & 1.1176930011 & 0.9630055289\\ \hline
		2 4 1 3 & 1.1375931232 & 0.9577180134\\ \hline
		2 1 4 3 & 1.1465405299 & 0.9561742431\\ \hline
		1 3 2 4 & 1.1510444988 & 0.9558503134\\ \hline
		1 4 2 3 & 1.1567436851 & 0.9548260509\\ \hline
		1 3 4 2 $\sim$ 1 4 3 2 & 1.1561985648 & 0.9546118344\\ \hline
		1 2 4 3 & 1.1696577874 & 0.9528914233\\ 
		\hline
	\end{tabular}\\
	\caption{Approximate asmptotics for length 4 patterns}
\end{table}

\noindent Similarly, {\tt AsymApproxRank(5,25,20)} would give us the approximations for the $\gamma$ and $\rho$ values for length $5$ patterns. The output can be found on the paper's website.\\

\section{Further Work}\label{SectConj}

In this paper, we outlined the key procedures in the {\tt CAV} Maple package. The cluster tail generating function was defined, and a constructive approach was demonstrated in finding a functional equation for it. Using this functional equation, we were able to more quickly count permutations avoiding a prescribed set of patterns. In addition, by applying Theorem ~\ref{OverlapThm}, we were able to totally classify c-Wilf-Equivalences in single patterns of length $3$, $4$, and $5$ rigorously, while nearly classifying single patterns of length $6$. We were also able to classify c-Wilf-Equivalences in a few cases of multiple pattern sets. Finally, we were able to use the faster algorithm to compute approximate values for asymptotic constants.\\

Despite this, there is a lot of room for improvement algorithmically and quite a few new open problems/conjectures arise. Some of the conjectures are listed below.\\

\noindent Elizalde and Noy provided the following conjecture in ~\cite{EN}:

\begin{conj}
	For a fixed pattern length $k$, the increasing pattern $\sigma = 1 2 \ldots k$ is the ``maximal'' pattern, in the sense that $\alpha_{\sigma}(n) \geq \alpha_{p}(n)$ for all $p \in S_{k}$ and all $n$.\\
\end{conj}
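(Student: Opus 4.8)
The plan is to attack this conjecture through the cluster method of Theorem~\ref{ThmGJ}, specialized to a single pattern $p \in \mathcal{S}_k$ with $t = 0$. Write $C_p(j) = \sum_{w \in \mathcal{C}_j} (-1)^{m(w)}$ for the signed cluster sum, where $m(w)$ is the number of marked occurrences in $w$ (so $C_p(j) = 0$ for $1 < j < k$), and set $C_p(1) := 1$ (the contribution of a single free letter). Then Theorem~\ref{ThmGJ} unrolls, via $1/(1-u) = \sum_{r \ge 0} u^r$, into the \emph{cluster expansion}
\begin{equation*}
	\alpha_p(n) = \sum_{j_1 + \cdots + j_r = n} \binom{n}{j_1, \ldots, j_r} \, C_p(j_1) \cdots C_p(j_r),
\end{equation*}
the sum ranging over all ways of writing $n$ as an ordered sum of positive integers. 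Thus the conjecture reduces to showing that this expansion is always at least as large for $\sigma = 12 \cdots k$ as for any other $p$, and the first concrete step is to pin down $C_\sigma(j)$ exactly. Since the marked windows of $\sigma$ are increasing and the windows of any cluster overlap and cover it, the only underlying permutation of a length-$j$ $\sigma$-cluster is the identity $12\cdots j$, and a marking is exactly a choice of start positions $1 = i_1 < i_2 < \cdots < i_m = j-k+1$ with $i_{s+1} - i_s \le k-1$ — equivalently a composition of $j - k$ into parts from $\{1, \ldots, k-1\}$, with $m$ one more than the number of parts. This yields a clean signed closed form for $C_\sigma(j)$, and hence an explicit rational expression for $\sum_j C_\sigma(j)\, z^j/j!$.

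The second step is the comparison. Because $\sigma$ is ``maximally overlapping,'' one expects every length-$j$ cluster of $p$ to be dominated, in an appropriately signed sense, by the clusters of $\sigma$. The natural target is a statement of the following shape: there is a sign-reversing involution on the set $\mathcal{C}_j[p]$ of length-$j$ $p$-clusters — obtained by splicing or chopping at a suitable overlap, in the spirit of the reductions in Section~\ref{CavAlg} — whose non-fixed points cancel in pairs of opposite weight, and whose fixed points inject, weight-preservingly, into the markings of $12\cdots j$. Granting such a relation, one would feed it into the cluster expansion above and try to conclude $\alpha_\sigma(n) - \alpha_p(n) \ge 0$ term by term over the ordered sums. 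A useful sanity check throughout is the {\tt CAV} package, which confirms the inequality for all patterns of length $\le 6$ over the ranges computed; and in the asymptotic regime one might separately try to deduce $\rho_\sigma \ge \rho_p$ from the spectral description of $\rho$ in \cite{EKP}.

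The hard part will be the control of signs. The map $u(z) \mapsto 1/(1 - u(z))$ is not monotone once $u$ has negative coefficients, and both $C_p(j)$ and $C_\sigma(j)$ genuinely oscillate in sign, so no coefficientwise comparison of generating functions can work and no purely formal manipulation will suffice: the proof must supply an honest, sign-aware combinatorial matching between the two families of clusters (or a rearrangement of the cluster expansion into manifestly nonnegative blocks). Building such a matching that is uniform across all patterns $p$ of a given length, and whose fixed-point set is small enough to be absorbed by the $\sigma$-clusters, is where essentially all of the difficulty lies; I would expect the rest — the bookkeeping in the cluster expansion and the verification of base cases — to be routine by comparison.
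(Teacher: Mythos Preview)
This statement is presented in the paper as a \emph{conjecture} (due to Elizalde and Noy), not a theorem; the paper offers no proof, only experimental support via the {\tt CAV} package. So there is no ``paper's own proof'' to compare your attempt against.

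As for your proposal itself: it is not a proof but a research outline, and you say so explicitly. The first step --- computing $C_\sigma(j)$ for the increasing pattern via compositions of $j-k$ into parts of size at most $k-1$ --- is correct and well known. The second step, however, is where your proposal stops being a proof and becomes a wish list: you posit the existence of a sign-reversing involution on $\mathcal{C}_j[p]$ whose fixed points inject into the $\sigma$-markings, but you give no construction, no candidate, and no reason to believe one exists uniformly in $p$. You then correctly diagnose why a naive approach fails (the map $u \mapsto 1/(1-u)$ is not coefficientwise monotone, and the $C_p(j)$ oscillate), but diagnosing the obstruction is not the same as overcoming it. Even granting your hoped-for cluster comparison, the step ``feed it into the cluster expansion and conclude $\alpha_\sigma(n) - \alpha_p(n) \ge 0$ term by term'' is precisely the step you just argued cannot work, since the multinomial expansion mixes signs.

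In short: the gap is genuine and you have named it yourself. What is missing is any actual mechanism --- combinatorial or analytic --- for the comparison. For what it is worth, the conjecture was subsequently proved by Elizalde (\emph{Proc.\ London Math.\ Soc.}\ (3) \textbf{106} (2013), 957--979), and the argument there does not proceed via a direct sign-reversing involution on clusters but rather through a more delicate analysis involving linear extensions of posets; you may find it instructive to see how the sign issue is circumvented.
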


\noindent Based off of experimentation, we also have the following analogous conjectures:

\begin{conj}
	For a fixed pattern length $k$, the pattern $\sigma = 1 2 \ldots (k-2) (k) (k-1)$ is the ``minimal'' pattern, in the sense that $\alpha_{p}(n) \geq \alpha_{\sigma}(n)$ for all $p \in S_{k}$ and all $n$.
\end{conj}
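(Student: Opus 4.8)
The plan is to attack this through the cluster method (Theorem~\ref{ThmGJ}) together with the very restricted overlap structure of $\sigma = 1\,2\cdots(k-2)\,k\,(k-1)$. The first step is to record that $\sigma$ has only the trivial self-overlap of length $1$: every proper prefix $1\,2\cdots j$ of $\sigma$ is increasing, while every suffix of $\sigma$ of length $j \ge 2$ reduces to a permutation whose last two entries form a descent (because $\sigma_{k-1}\sigma_k = k\,(k-1)$), so these can never agree for $j \ge 2$. Consequently the length-$n$ clusters for $\sigma$ exist only when $n \equiv 1 \pmod{k-1}$; they are chains of $r = (n-1)/(k-1)$ copies of $\sigma$ glued at single points, and at $t=0$ each contributes sign $(-1)^r$. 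This is the \emph{sparsest} possible cluster contribution (contrast the monotone pattern, which has clusters of every length $\ge k$), and the heuristic is that the fewer ways the pattern can overlap itself, the harder it is to ``hide'' occurrences by inclusion--exclusion, hence the smaller $\alpha_\sigma(n)$ should be. One may also first use reversal/complement symmetry to cut down the family of patterns $p$ that must be considered.

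With this in hand, the natural route is an induction on $n$ built on the recurrence~\eqref{AvRecur}. Assuming $\alpha_\sigma(i) \le \alpha_p(i)$ for all $i < n$ and all $p \in S_k$, the leading terms $n\,\alpha(n-1)$ already obey the desired inequality, so the conjecture reduces to comparing the cluster corrections $\sum_{j}\binom{n}{j}C_\sigma(j)\,\alpha_\sigma(n-j)$ and $\sum_{j}\binom{n}{j}C_p(j)\,\alpha_p(n-j)$ and showing the former does not exceed the latter. It would be very convenient if one could prove the termwise bound $C_\sigma(j) \le C_p(j)$ for all $j$ and push it through, and the structural description above makes $C_\sigma(j)$ extremely explicit, so such a bound is a reasonable first target.

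This, however, is exactly where the main obstacle sits, and why the statement is still a conjecture. The cluster numbers $C(j)$ alternate in sign at $t=0$, so even a clean termwise inequality $C_\sigma(j)\le C_p(j)$ does not propagate through~\eqref{AvRecur}, and $1/(1-z-\sum_j C(j)z^j/j!)$ is not monotone in the correction series. What is really needed is a finer, genuinely combinatorial statement: either an explicit injection, for every $p \in S_k$, from the $\sigma$-avoiding permutations in $S_n$ into the $p$-avoiding ones, or a ``monotone coupling'' at the level of permutations carrying a marked maximal family of occurrences, so that the signed sums can be matched after sign-reversing cancellation. Producing such a map \emph{uniformly} over all $p$, rather than case by case, is the crux of the difficulty. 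As supporting evidence and a plausible stepping stone, the asymptotic version is within reach of known methods: since $\alpha_p(n)\sim\gamma_p\rho_p^n\,n!$ by~\cite{EKP}, it is consistent with the asymptotic analysis of~\cite{SE,EN} that $\rho_\sigma$ is minimal among all length-$k$ patterns, which would give the inequality for all large $n$; proving the strict gap $\rho_\sigma<\rho_p$ with effective error bounds would reduce the conjecture to a finite check, but the remaining small values of $n$ — where cancellations are most severe — would still demand one of the combinatorial arguments above. For $k=3$ the conjecture is the classical fact that $132$ (and its symmetry class) is the least avoided length-$3$ pattern, readable from the known generating function, which should serve as the base case and a sanity check for any inductive scheme.
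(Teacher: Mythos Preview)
The paper offers no proof of this statement: it is recorded in Section~\ref{SectConj} purely as a conjecture suggested by experimentation with the {\tt CAV} package, with no argument attempted. So there is nothing to compare your proposal against on the paper's side.

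Your write-up is an honest survey of a plausible line of attack, not a proof, and you say as much yourself. The structural observations are correct: $\sigma=1\,2\cdots(k-2)\,k\,(k-1)$ has only the length-$1$ self-overlap, so its clusters live only in lengths $n\equiv 1\pmod{k-1}$ and carry sign $(-1)^r$ at $t=0$. The heuristic that sparse overlaps should mean small $\alpha_\sigma(n)$ is reasonable. But the genuine gap is exactly the one you name and do not close: the cluster numbers $C(j)$ at $t=0$ alternate in sign, the map $C\mapsto 1/(1-z-\sum C(j)z^j/j!)$ is not monotone, and hence no termwise bound on $C_\sigma$ versus $C_p$ can be fed through~\eqref{AvRecur} to yield $\alpha_\sigma(n)\le\alpha_p(n)$. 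You propose that what is really needed is either a uniform injection from $\sigma$-avoiders into $p$-avoiders for every $p\in S_k$, or a sign-reversing/coupling argument at the level of marked permutations, and that is precisely the missing idea; nothing in your outline supplies it. The asymptotic reduction you sketch (prove $\rho_\sigma<\rho_p$ with effective constants, then do a finite check) is also only a program, not a result. In short, your proposal accurately diagnoses why the statement remains open, but it does not advance beyond that diagnosis.
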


\begin{conj}
	For a fixed pattern length $k$, the pattern set $B = \{1 2 \ldots k, 2 3 \ldots k 1\}$ is the ``maximal'' pattern set among sets of $2$ patterns, in the sense that $\alpha_{B}(n) \geq \alpha_{B^{\prime}}(n)$ for all $B^{\prime} \in {S_{k} \choose 2}$ and all $n$.
\end{conj}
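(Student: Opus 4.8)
The plan is to recast $B$-avoidance automaton-theoretically and then establish an extremal property of the resulting automaton. Read $\sigma\in\mathcal{S}_{n}$ from left to right and record, after the $i$th step, the reduction $\rho(\sigma_{i-k+2}\cdots\sigma_{i})\in\mathcal{S}_{k-1}$ of the trailing $k-1$ entries; appending one more entry then traverses one of the $k$ out-edges of the current vertex in the complete ``insertion graph'' $G$ with vertex set $\mathcal{S}_{k-1}$. Each length-$k$ pattern $p$ forbids exactly one edge of $G$ — the one issuing from $\rho(p_{1}\cdots p_{k-1})$ in the slot determined by the rank of $p_{k}$ — so a two-pattern set corresponds bijectively to a choice of two distinct deleted edges. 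A direct check shows that $12\cdots k$ deletes the ``insert above everything'' edge and $23\cdots k1$ the ``insert below everything'' edge, both issuing from the single all-increasing vertex $\iota=12\cdots(k-1)$; thus Conjecture~3 asserts that, among the $\binom{k!}{2}$ ways of deleting two edges from $G$, deleting the two extreme edges at $\iota$ maximizes, for every $n$, the number of length-$n$ permutations whose walk never uses a deleted edge.

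I would then attack this extremal statement in one of two ways. The first is a combinatorial injection, for each two-set $B'$, from the permutations of $[n]$ avoiding $B'$ into those avoiding $B$: the identity on the common subset, and, on a permutation that avoids $B'$ but contains $12\cdots k$ or $23\cdots k1$, a minimal and reversible ``flattening'' of each offending increasing run that turns it into a controlled occurrence of a pattern of $B'$. The second uses the cluster generating function: setting $t=0$ in Theorem~\ref{ThmGJ} gives $A_{B}(z)=\bigl(1-u_{B}(z)\bigr)^{-1}$ with $u_{B}(z)=z+\sum_{k'\ge1}C_{B}(k')z^{k'}/k'!$, whence $A_{B}(z)-A_{B'}(z)=A_{B}(z)\,A_{B'}(z)\,\bigl(u_{B}(z)-u_{B'}(z)\bigr)$; since $A_{B}$ and $A_{B'}$ have nonnegative coefficients, and $u_{B}-u_{B'}=\sum_{k'>k}\bigl(C_{B}(k')-C_{B'}(k')\bigr)z^{k'}/k'!$ (the coefficients through $z^{k}$ cancel, as $C_{B'}(k')=0$ for $k'<k$ and $C_{B'}(k)=-2$ for every two-set $B'$), it is enough to show this triple product has nonnegative coefficients. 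Here the first task is the case $n=k+1$, which amounts exactly to $C_{B}(k+1)\ge C_{B'}(k+1)$ and should be provable directly by counting the length-$(k+1)$ clusters — those with two overlapping marked windows — in each case; the real work is then to show that forming the reciprocal promotes this into a coefficientwise inequality for all $n$, and for that I would exploit the rigid shape of the clusters of $B$ (both patterns begin, and $12\cdots k$ also ends, with an increasing run of length $k-1$, which confines long clusters of $B$ to a few explicit families, much as for the single pattern $12\cdots k$) to compute $C_{B}(k')$ exactly and bound $C_{B'}(k')$. For small $k$ one can sidestep the general argument: Theorem~\ref{OverlapThm} collapses the $\binom{k!}{2}$ candidate sets into a short list of classes, {\tt CAV} produces a generating function for each, and one is left with finitely many positivity checks for $A_{B}(z)-A_{B'}(z)$; I expect this to settle $k=3,4,5$.

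The principal obstacle is that the comparison is irreducibly global: there is no reason for the termwise inequality $C_{B}(k')\ge C_{B'}(k')$ to persist for all $k'$, so the result cannot be read off the cluster numbers, and both approaches above demand genuine new input — the reciprocal must be controlled against possibly wrong-signed higher-order terms, and the injection must cope with many overlapping or nested occurrences of $12\cdots k$ and $23\cdots k1$, where repairing one occurrence can create another. Since the single-pattern analogue (Conjecture~1) is itself still open, a realistic first milestone is the restricted version of Conjecture~3 in which $B'$ is assumed to contain $12\cdots k$: then the automata for $B$ and for $B'$ both differ from $G$ by the ``insert above $\iota$'' deletion plus one further edge, so the claim becomes that deleting the other extreme edge at $\iota$ is the least harmful further deletion, and the injection need only repair occurrences of a single additional pattern.
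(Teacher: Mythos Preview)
The paper states this as an open conjecture and gives no proof, so there is nothing to compare your proposal against. What you have written is a research outline rather than a proof, and you are candid about that: both lines of attack stop short of a complete argument, and you correctly identify the central obstruction (the cluster numbers $C_{B}(k')$ need not dominate $C_{B'}(k')$ termwise, so coefficientwise nonnegativity of $A_{B}A_{B'}(u_{B}-u_{B'})$ is not automatic).

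A few specific remarks. Your automaton reformulation is accurate --- each length-$k$ pattern does correspond to a unique edge of the insertion graph on $\mathcal{S}_{k-1}$, and the two patterns in $B$ do delete the two extreme out-edges at $\iota$ --- but it does not reduce the conjecture to a purely graph-theoretic extremal question, because the map from walks to permutations is many-to-one with nonconstant multiplicities (already for $k=2$ the fibres are counted by the descent-set statistics). Two edge-deletions that look symmetric in $G$ can therefore yield different values of $\alpha(n)$; you implicitly concede this by dropping the graph picture in your actual attacks. The identity $A_{B}-A_{B'}=A_{B}A_{B'}(u_{B}-u_{B'})$ is correct and is the natural starting point from Theorem~\ref{ThmGJ}, but your proposed next move --- compute $C_{B}(k')$ exactly and bound $C_{B'}(k')$ --- is precisely where the single-pattern Conjecture~1 has resisted proof, so one should not expect it to be routine here. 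The injection approach is too schematic to evaluate: ``minimal and reversible flattening'' of an offending increasing run is never specified, and the obstacle you name (repairing one occurrence creates another) is genuine and unaddressed. Your proposed first milestone, restricting to $B'$ that already contain $12\cdots k$, is a sensible place to start; even that special case would be new.
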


\begin{conj}
	For a fixed pattern length $k$, the pattern set $B = \{1 2 \ldots (k-2) (k) (k-1), 1 2 \ldots (k-3) (k-1) (k) (k-2)\}$ is the ``minimal'' pattern set among sets of $2$ patterns, in the sense that $\alpha_{B^{\prime}}(n) \geq \alpha_{B}(n)$ for all $B^{\prime} \in {S_{k} \choose 2}$ and all $n$.
\end{conj}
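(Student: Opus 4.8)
The conjecture is a universal (every $n$) extremal inequality, so the natural engine is the cluster recurrence of Equation~\eqref{AvRecur} at $t=0$ together with an inductive comparison of signed cluster numbers. By Theorem~\ref{ThmGJ} the difference $\alpha_{B'}(n)-\alpha_B(n)$ is controlled by the two kernels $K_{B}(z)=1-z-\sum_{j\ge 1}C_B(j)z^{j}/j!$ and $K_{B'}(z)$, and the plan is to prove the stronger coefficientwise statement $\alpha_{B'}(n)\ge\alpha_B(n)$ by induction on $n$ via $\alpha(n)=n\alpha(n-1)+\sum_{j=1}^{n}\binom{n}{j}C(j)\alpha(n-j)$. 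Since the $C(j)$ carry signs, a raw term-by-term comparison cannot work; instead I would pass to the refined recursion that tracks the order type $\tau$ of the last $|p|-1$ entries of a permutation --- essentially the cluster-tail data of Section~\ref{SectCAVT} --- obtaining a transfer-matrix recursion $f^{B}_{n}=T^{(n)}_{B}f^{B}_{n-1}$ on the finite-dimensional space indexed by such $\tau$, with $\alpha_B(n)=\mathbf{1}^{\top}f^{B}_{n}$. The goal then becomes: find a partial order on these state vectors, preserved by every operator $T^{(n)}_{B'}$, in which $T^{(n)}_{B}$ is the most contracting one.

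To identify the cluster structure of $B$ I would exploit a self-similarity of the two patterns: deleting the last entry of either $1\,2\cdots(k-2)\,k\,(k-1)$ or $1\,2\cdots(k-3)\,(k-1)\,k\,(k-2)$ yields the increasing pattern $1\,2\cdots(k-1)$, while deleting the first entry returns the length-$(k-1)$ analog of the same pattern. This forces all self- and cross-overlaps of $p_{1}=1\,2\cdots(k-2)\,k\,(k-1)$ and $p_{2}=1\,2\cdots(k-3)\,(k-1)\,k\,(k-2)$ to have very short overlap length (length $1$, with one exceptional family), so the clusters of $B$, and hence the numbers $C_B(j)$, admit an explicit ``maximally negative'' description. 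The key lemma I would aim for is: for any competing $B'=\{q_{1},q_{2}\}$ there is an injection from the marked (cluster) configurations counted by $B'$ into those counted by $B$ that preserves the parity of the number of marks and is surjective onto the sign-negative part, so that at every length the signed cluster contribution of $B$ dominates that of $B'$, which feeds the induction above.

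The most delicate step is optimality of the \emph{second} pattern: granting, as in the companion single-pattern minimality conjecture, that $p_{1}$ must be one element of the extremal pair, one must show the other element is exactly $p_{2}$. I would use an exchange argument --- starting from an arbitrary $B'=\{p_{1},q\}$, reducing general $B'$ to this shape via the single-pattern inequalities --- to show that replacing $q$ by $p_{2}$ never increases $\alpha$, because $p_{2}$ shares the longest possible overlap with $p_{1}$ precisely on the side where an overlap imposes the most additional constraint, and is itself closest to increasing among the remaining candidates, so that avoiding $\{p_{1},p_{2}\}$ deletes the most permutations window by window.

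The main obstacle is producing, across the previous two steps, a single construction valid uniformly over all of $\binom{S_k}{2}$: sign cancellation in the cluster method is the usual culprit, and because the extremal object is a two-pattern set the cross-overlaps between $p_{1}$ and $p_{2}$ genuinely enter and interact differently with every competitor, so one cannot simply piggyback on an inequality for single patterns. As with the Elizalde--Noy conjecture that $1\,2\cdots k$ is maximal, I expect a clean proof to require first settling the single-pattern statements and then bootstrapping, with the combinatorics of cross-overlaps being the principal technical burden; in the meantime the conjecture is readily checked for small $k$ and moderate $n$ with the {\tt CAV} package.
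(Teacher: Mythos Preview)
The paper does not prove this statement: it is listed as Conjecture~4 in Section~\ref{SectConj}, offered purely on the basis of experimentation with the {\tt CAV} package, with no argument given. There is therefore no ``paper's own proof'' to compare against.

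Your proposal is not a proof either, and you essentially acknowledge this yourself. What you have written is a research program: you outline a transfer-matrix reformulation, state a ``key lemma I would aim for'' about a sign-preserving injection of marked cluster configurations, and propose an exchange argument for the choice of the second pattern---but none of these steps is carried out. Crucially, your reduction to the shape $B'=\{p_1,q\}$ presupposes the single-pattern minimality conjecture (Conjecture~2 in the paper), which is itself open; and the Elizalde--Noy maximality conjecture you invoke as a template is also unresolved in the paper's setting. Your closing paragraph concedes the main obstacle (a uniform construction over all of $\binom{S_k}{2}$ that survives the sign cancellations of the cluster method) remains, and falls back on numerical checks for small $k$ and $n$. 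That is exactly the evidentiary status the paper already assigns to the statement. As a heuristic roadmap your observations about the overlap structure of $p_1$ and $p_2$ are reasonable (indeed, all self- and cross-overlaps have length~$1$), but nothing here upgrades the conjecture to a theorem.
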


\begin{conj}
	For a fixed pattern length $k$, the pattern set $B = \{1 2 \ldots k, 2 3 \ldots k 1, k 1 2 \ldots (k-1)\}$ is the ``maximal'' pattern set among sets of $3$ patterns, in the sense that $\alpha_{B}(n) \geq \alpha_{B^{\prime}}(n)$ for all $B^{\prime} \in {S_{k} \choose 3}$ and all $n$.\\
\end{conj}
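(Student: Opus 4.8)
The plan is to reduce the claim, via the Goulden--Jackson framework of Theorem~\ref{ThmGJ}, to an extremal property of the cluster numbers of $B$, and then to push that property through the avoidance recurrence \eqref{AvRecur} by induction on $n$. Setting $t=0$ (so $weight(p)=-1$ for each pattern), any three-element set $B'=\{q_{1},q_{2},q_{3}\}\subseteq S_{k}$ satisfies $\alpha_{B'}(n)=n\,\alpha_{B'}(n-1)+\sum_{j=1}^{n}\binom{n}{j}C_{B'}(j)\,\alpha_{B'}(n-j)$, where $C_{B'}(j)=\sum_{w\in\mathcal{C}_{j}(B')}(-1)^{m(w)}$ is the signed count of length-$j$ clusters. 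So it suffices to understand how $C_{B}(j)$ compares with $C_{B'}(j)$ for each $j$, and then argue inductively: assuming $\alpha_{B}(i)\ge\alpha_{B'}(i)$ for all $i<n$, compare the two recurrences.

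First I would pin down the clusters for the special set $B=\{12\cdots k,\ 23\cdots k1,\ k12\cdots(k-1)\}$. Because these are the identity together with its two nearest cyclic rotations, every pair among them overlaps in essentially every admissible shift; I would compute all nine sets $\mathrm{OverlapMaps}(p_{a},p_{b})$ and feed them into the cluster-recurrence machinery of Section~\ref{CavAlg} to obtain a closed form (or at least a clean sign pattern) for $C_{B}(j)$. The expectation, matching the ``maximal overlap'' heuristic, is that $B$ maximizes the signed cluster contribution among all three-element sets of length $k$: the more the forbidden patterns overlap, the fewer permutations each cluster of occurrences actually rules out. A parallel and easier subtask is to catalogue, for an arbitrary $B'$, how limited its self- and cross-overlap data must be — any pattern other than the identity has a strictly smaller self-overlap set, and two ``unrelated'' length-$k$ patterns overlap only in a few shifts, so $\mathcal{C}_{j}(B')$ is both smaller and more prone to internal cancellation.

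The hard part will be that the cluster numbers carry the signs $(-1)^{m(w)}$, so the naive inequality $C_{B}(j)\ge C_{B'}(j)$ for each $j$ is both false and insufficient; the genuine obstacle is controlling these alternating sums well enough that the comparison survives the recurrence~\eqref{AvRecur}. I see two routes past this. One is analytic: write $A_{B}(z)=1/(1-z-\sum_{j}C_{B}(j)z^{j}/j!)$ and likewise for $B'$, and aim for coefficientwise domination of $A_{B}(z)$ over $A_{B'}(z)$ directly, e.g.\ via a transfer-operator or automaton description of $\{q_{1},q_{2},q_{3}\}$-avoidance, showing that the operator attached to $B$ dominates every competitor in a suitable (Perron--Frobenius-compatible) sense uniformly in $n$. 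The other is combinatorial: construct an explicit injection from length-$n$ permutations avoiding $B'$ into those avoiding $B$, the natural candidate being a ``local repair'' that destroys the first forbidden $B$-window by a single boundary transposition, exploiting that each pattern in $B$ has at most one descent sitting at an extreme position; the difficulty there is showing the repair is globally well-defined (creates no new forbidden window elsewhere) and injective.

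Finally, as a consistency check and a fallback for small $k$, I would verify the statement directly with the {\tt CAV} package for $k\le 5$ or $6$ (the paper already reports experimental confirmation up to $n=14$); this at minimum rules out a small counterexample that would break the inductive scheme, and may expose the precise sign pattern of the $C_{B}(j)$ that the general argument must exploit.
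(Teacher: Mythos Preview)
The paper does not prove this statement: it is listed as a \emph{Conjecture} in Section~\ref{SectConj}, offered purely on the basis of experimentation with the {\tt CAV} package, and no argument is given or claimed. So there is no paper proof to compare against; your proposal is an attempt at an open problem, not a rederivation.

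On its own merits, what you have written is a plan rather than a proof, and you have already identified the genuine obstruction yourself. The reduction to the recurrence~\eqref{AvRecur} is fine, but the inductive step fails exactly where you say it does: knowing $\alpha_{B}(i)\ge\alpha_{B'}(i)$ for $i<n$ and plugging into $\alpha(n)=n\,\alpha(n-1)+\sum_{j}\binom{n}{j}C(j)\,\alpha(n-j)$ does not yield $\alpha_{B}(n)\ge\alpha_{B'}(n)$ unless you can control the signed quantities $C_{B}(j)$ against $C_{B'}(j)$, and since $C(j)$ is an alternating sum over clusters there is no monotonicity to exploit. Your two proposed escape routes are each substantial open-ended programs: the transfer-operator domination you allude to would need a uniform Perron--Frobenius comparison over all $\binom{k!}{3}$ competing sets, which is not known even for the single-pattern analogue (Conjecture~1 in the paper); and the ``local repair'' injection is precisely the kind of map that tends to create new forbidden windows elsewhere, a difficulty you note but do not resolve. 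Neither route is carried far enough to constitute an argument.

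In short: there is no gap relative to the paper because the paper makes no claim of proof, but your proposal as it stands is a reasonable outline of where the difficulty lies rather than a proof of the conjecture.
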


\noindent In addition, based off of empirical evidence for single pattern avoidance up to length $6$ patterns, we believe the following:

\begin{conj}
	For any two patterns $p_{1}$ and $p_{2}$ of the same length, either $p_{1}$ and $p_{2}$ are strongly c-Wilf-Equivalent or they are not c-Wilf-Equivalent at all. 
\end{conj}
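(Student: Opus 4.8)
The plan is to translate both notions into the cluster language and then try to promote the single specialization $t=0$ to the full $t$-dependence. By Theorem~\ref{ThmGJ}, two single patterns $p_1,p_2$ of common length $\ell$ are strongly c-Wilf-equivalent precisely when $C_{p_1}(k)=C_{p_2}(k)$ as polynomials in $t$ for all $k\ge 1$, and merely c-Wilf-equivalent precisely when $C_{p_1}(k)\big|_{t=0}=C_{p_2}(k)\big|_{t=0}$ for all $k$. Writing $C_p(k)=\sum_{m\ge1}c_{k,m}(p)(t-1)^m$, where $c_{k,m}(p)$ is the number of length-$k$ clusters of $p$ with exactly $m$ marked copies, and setting $D_m^{(p)}(z)=\sum_k c_{k,m}(p)\,z^k/k!$, the goal becomes: deduce $D_m^{(p_1)}=D_m^{(p_2)}$ for every $m$ from the single identity $\sum_m(-1)^m D_m^{(p_1)}=\sum_m(-1)^m D_m^{(p_2)}$.

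First I would record the elementary facts that fix the boundary of any induction: $c_{k,m}(p)=0$ unless $\ell+m-1\le k\le m(\ell-1)+1$ (each of the $m{-}1$ consecutive overlaps has length between $1$ and $\ell-1$), so $D_1^{(p)}(z)=z^\ell/\ell!$ for every $p$, and $c_{m(\ell-1)+1,m}(p)=1$ for every $p$ (a chain of length-$1$ self-overlaps is always valid and always reduces to a genuine permutation, since one only identifies a single entry between consecutive copies). Next I would set up the transfer-operator picture underlying the algorithm of Section~\ref{TailAlg}: the refined cluster tail generating functions $F(k,p;[z_1,\dots,z_\ell])$ satisfy a linear recursion in the number of copies, so there is a linear operator $\mathcal T_p$ (``append one more overlapping copy of $p$'') and a seed $\delta_p$ (``one copy'') with $D_m^{(p)}=\langle\mathbf 1,\mathcal T_p^{\,m-1}\delta_p\rangle$ after specializing all $z_i\to 1$, and hence $\sum_m(-1)^m D_m^{(p)}=-\langle\mathbf 1,(I+\mathcal T_p)^{-1}\delta_p\rangle$. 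In this language the conjecture asks: if the resolvent value $\langle\mathbf 1,(I+\mathcal T_{p_1})^{-1}\delta_{p_1}\rangle$ equals $\langle\mathbf 1,(I+\mathcal T_{p_2})^{-1}\delta_{p_2}\rangle$, must all the ``moments'' $\langle\mathbf 1,\mathcal T_{p_i}^{\,m}\delta_{p_i}\rangle$ agree? I would try to answer this by showing that $\mathcal T_{p_1}$ and $\mathcal T_{p_2}$ are forced to be conjugate, via a basis change matching cluster tails of $p_1$ with those of $p_2$, whenever the $t=0$ data agree, extracting that conjugacy from the combinatorics of $\mathrm{OverlapMaps}(p_i,p_i)$ together with the reduction bookkeeping.

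The main obstacle is that this is a genuine inverse problem with, a priori, far too little data: a single evaluation of a resolvent does not determine its moments, so the promotion from $t=0$ to all $t$ cannot be formal and must exploit that $\mathcal T_p,\delta_p$ arise from an honest permutation pattern rather than an arbitrary operator. Worse, the relevant information is not local: whether a prescribed chain of self-overlaps of $p$ reduces to a valid permutation is a global condition not captured by $\mathrm{OverlapMaps}(p,p)$ alone, which is exactly why Theorem~\ref{OverlapThm} leaves the four sporadic length-$6$ coincidences unresolved. So the realistic program is to first classify the $t=0$ cluster data among single patterns of a fixed length (equivalently, prove there are no ``accidental'' c-Wilf-equivalences beyond strong ones, by a case analysis of admissible overlap structures), and only then hope the resulting structural rigidity forces the refined counts to coincide. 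I expect the crux to be precisely this rigidity statement, and a clean proof would most plausibly come either from a bijection between the $m$-copy clusters of $p_1$ and of $p_2$ built directly from whatever witnesses $A_{p_1}=A_{p_2}$, or from harvesting enough extra algebraic relations (for instance c-Wilf-equivalences forced among related longer patterns, or among pattern pairs $\{p_i,q\}$) to recover the missing $t$-dependence.
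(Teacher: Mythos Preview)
The statement you are attempting to prove is stated in the paper as a \emph{conjecture}, not a theorem; the paper offers no proof whatsoever, only the remark that it ``certainly holds for single patterns of length $3$, $4$, and $5$'' and ``appears to hold for single length $6$ patterns'' based on empirical evidence. There is therefore no paper proof to compare your proposal against.

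As for your proposal itself, it is not a proof either, and you essentially acknowledge this. You correctly reformulate the question in cluster language and set up a transfer-operator framework, but then you identify the genuine obstruction yourself: recovering all moments $\langle\mathbf 1,\mathcal T_p^{\,m}\delta_p\rangle$ from the single resolvent value $\langle\mathbf 1,(I+\mathcal T_p)^{-1}\delta_p\rangle$ is an underdetermined inverse problem, and nothing in your outline supplies the missing rigidity. Your suggestion to ``show that $\mathcal T_{p_1}$ and $\mathcal T_{p_2}$ are forced to be conjugate'' from the $t=0$ data is exactly the content of the conjecture restated, not a step toward proving it, and your fallback (``a case analysis of admissible overlap structures'') would at best handle fixed small lengths rather than the general statement. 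The four unresolved length-$6$ coincidences you mention are precisely the kind of case where $\mathrm{OverlapMaps}$ data differ yet the enumerations appear to agree, so any argument routed through Theorem~\ref{OverlapThm} alone cannot close the gap. In short, your write-up is a reasonable survey of why the conjecture is hard and what shape a proof might take, but it contains no argument that actually establishes the claim.
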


\noindent This certainly holds for single patterns of length $3$, $4$, and $5$. This also appears to hold for single length $6$ patterns.\\

\noindent Finally, the author would like to thank Doron Zeilberger for his suggestions, comments, and encouragement towards the work in this paper.\\

\end{document}